\long\def\forget#1{}
\newcommand{\Verkuerzung}[2]{#1}
\newcommand{\lang}[1]{\mbox{#1}}
\newcounter{commentcounter}
\newcommand{\comment}[1]{\stepcounter{commentcounter}{\color{red}\textbf{Comment \arabic{commentcounter}.} #1}
\immediate\write16{}
\immediate\write16{Warning: There was still a comment . . . }
\immediate\write16{}}
\newcounter{urscommentcounter}
\def\?{\ 
{\bf\color{red}???}\ 
\immediate\write16{}
\immediate\write16{Warning: There was still a question mark . . . }
\immediate\write16{}}
\theoremstyle{plain}
\newtheorem{theorem}{Theorem}[section]
\newtheorem{corollary}[theorem]{Corollary}
\newtheorem{proposition}[theorem]{Proposition}
\theoremstyle{definition}
\newtheorem{definition}[theorem]{Definition}
\newtheorem{definition-theorem}[theorem]{Definition-Theorem}
\newtheorem{definition-remark}[theorem]{Definition-Remark}
\newtheorem{remark}[theorem]{Remark}
\theoremstyle{remark}
\newcounter{zahl}
\def\theenumi{(\alph{enumi})}
\def\p@enumii{\theenumi}
\newcommand{\DS}{\displaystyle}
\newcommand{\TS}{\textstyle}
\newcommand{\SC}{\scriptstyle}
\newcommand{\SSC}{\scriptscriptstyle}
\newcommand{\cM}{\mathcal{M}}
\DeclareMathOperator{\Aut}{Aut}
\DeclareMathOperator{\Gal}{Gal}
\DeclareMathOperator{\GL}{GL}
\DeclareMathOperator{\Koh}{H}
\DeclareMathOperator{\Hom}{Hom}
\DeclareMathOperator{\Isom}{Isom}
\DeclareMathOperator{\Quot}{Frac}
\DeclareMathOperator{\Rep}{Rep}
\DeclareMathOperator{\SL}{SL}
\DeclareMathOperator{\Spec}{Spec}
\DeclareMathOperator{\Spf}{Spf}
\DeclareMathOperator{\Tr}{Tr}
\newcommand{\alg}{{\rm alg}}
\newcommand{\fppf}{{\it fppf\/}}
\newcommand{\fpqc}{{\it fpqc\/}}
\DeclareMathOperator{\id}{\,id}
\newcommand{\red}{{\rm red}}
\newcommand{\sep}{{\rm sep}}
\newcommand{\mot}{{\cM ot_C^{\ul \nu}}}
\newcommand{\scrA}{{\mathscr{A}}}
\newcommand{\scrH}{{\mathscr{H}}}
\newcommand{\scrS}{{\mathscr{S}}}
\DeclareMathOperator{\whtimes}{\mathchoice
            {\wh{\raisebox{0ex}[0ex]{$\DS\times$}}}
            {\wh{\raisebox{0ex}[0ex]{$\TS\times$}}}
            {\wh{\raisebox{0ex}[0ex]{$\SC\times$}}}
            {\wh{\raisebox{0ex}[0ex]{$\SSC\times$}}}}
\renewcommand{\phi}{\varphi}
\renewcommand{\epsilon}{\varepsilon}
\newcommand{\BOne} {{\mathchoice{\hbox{\rm1\kern-2.7pt l\kern.9pt}}
                              {\hbox{\rm1\kern-2.7pt l\kern.9pt}}
                              {\hbox{\scriptsize\rm1\kern-2.3pt l\kern.4pt}}
                              {\hbox{\scriptsize\rm1\kern-2.4pt l\kern.5pt}}}}
\newcommand{\BA}{{\mathbb{A}}}
\newcommand{\BD}{{\mathbb{D}}}
\newcommand{\BF}{{\mathbb{F}}}
\newcommand{\BG}{{\mathbb{G}}}
\newcommand{\BL}{{\mathbb{L}}}
\newcommand{\BN}{{\mathbb{N}}}
\newcommand{\BP}{{\mathbb{P}}}
\newcommand{\BQ}{{\mathbb{Q}}}
\newcommand{\BR}{{\mathbb{R}}}
\newcommand{\BS}{{\mathbb{S}}}
\newcommand{\BZ}{{\mathbb{Z}}}
\newcommand{\CA}{{\cal{A}}}
\newcommand{\CF}{{\cal{F}}}
\newcommand{\CG}{{\cal{G}}}
\newcommand{\CI}{{\cal{I}}}
\newcommand{\CL}{{\cal{L}}}
\newcommand{\CM}{{\cal{M}}}
\newcommand{\CN}{{\cal{N}}}
\newcommand{\CO}{{\cal{O}}}
\newcommand{\CP}{{\cal{P}}}
\newcommand{\CS}{{\cal{S}}}
\newcommand{\CT}{{\cal{T}}}
\newcommand{\CV}{{\cal{V}}}
\newcommand{\CW}{{\cal{W}}}
\newcommand{\CX}{{\cal{X}}}
\newcommand{\CZ}{{\cal{Z}}}
\newcommand{\FG}{{\mathfrak{G}}}
\newcommand{\FM}{{\mathfrak{M}}}
\newcommand{\FP}{{\mathfrak{P}}}
\newcommand{\FQ}{{\mathfrak{Q}}}
\newcommand{\FX}{{\mathfrak{X}}}
\newcommand{\LM}{\textbf{M}^{loc}}
\let\setminus\smallsetminus
\newcommand{\es}{\enspace}
\newcommand{\dual}{^{\SSC\lor}}
\newcommand{\ul}[1]{{\underline{#1}}}
\newcommand{\ol}[1]{{\overline{#1}}}
\newcommand{\wh}[1]{{\widehat{#1}}}
\newcommand{\wt}[1]{{\widetilde{#1}}}
\newcommand{\SSS}{S}
\newcommand{\invlim}[1][]{\ifthenelse{\equal{#1}{}}
{\DS \lim_{\longleftarrow}}
{\DS \lim_{\underset{#1}{\longleftarrow}}}
}
\newcommand{\dirlim}[1][]{\ifthenelse{\equal{#1}{}}
{\DS \lim_{\longrightarrow}}
{\DS \lim_{\underset{#1}{\longrightarrow}}}
}
\newcommand{\dbl}{{\mathchoice{\mbox{\rm [\hspace{-0.15em}[}}
                              {\mbox{\rm [\hspace{-0.15em}[}}
                              {\mbox{\scriptsize\rm [\hspace{-0.15em}[}}
                              {\mbox{\tiny\rm [\hspace{-0.15em}[}}}}
\newcommand{\dbr}{{\mathchoice{\mbox{\rm ]\hspace{-0.15em}]}}
                              {\mbox{\rm ]\hspace{-0.15em}]}}
                              {\mbox{\scriptsize\rm ]\hspace{-0.15em}]}}
                              {\mbox{\tiny\rm ]\hspace{-0.15em}]}}}}
\newcommand{\dpl}{{\mathchoice{\mbox{\rm (\hspace{-0.15em}(}}
                              {\mbox{\rm (\hspace{-0.15em}(}}
                              {\mbox{\scriptsize\rm (\hspace{-0.15em}(}}
                              {\mbox{\tiny\rm (\hspace{-0.15em}(}}}}
\newcommand{\dpr}{{\mathchoice{\mbox{\rm )\hspace{-0.15em})}}
                              {\mbox{\rm )\hspace{-0.15em})}}
                              {\mbox{\scriptsize\rm )\hspace{-0.15em})}}
                              {\mbox{\tiny\rm )\hspace{-0.15em})}}}}
\newcommand{\dotBD}{\vbox{\hbox{\kern2pt\bf.}\vskip-4.5pt\hbox{$\BD$}}}
\DeclareMathOperator{\QIsog}{QIsog}
\DeclareMathOperator{\Nilp}{\CN \!{\it ilp}}
\DeclareMathOperator{\Sets}{\CS \!{\it ets}}
\def\s{\sigma^\ast}
\def\longto{\longrightarrow}
\def\isoto{\stackrel{}{\mbox{\hspace{1mm}\raisebox{+1.4mm}{$\SC\sim$}\hspace{-3.5mm}$\longrightarrow$}}}
\newbox\mybox
\def\arrover#1{\mathrel{
       \setbox\mybox=\hbox spread 1.4em{\hfil$\scriptstyle#1$\hfil}
       \vbox{\offinterlineskip\copy\mybox
             \hbox to\wd\mybox{\rightarrowfill}}}}
\newcommand{\BaseOfD}{\BF}
\newcommand{\BaseFldInSectUnif}{k}
\newcommand{\genericG}{P}
\DeclareMathOperator{\SpaceFl}{\CF\ell}
\newcommand{\tauGlob}{\tau}
\newcommand{\tauLoc}{\hat\tau}
\newcommand{\charsect}{s}
\begin{document}

\author{Esmail Arasteh Rad\forget{\footnote{Part of this research was carried out while I was visiting Institute For Research In Fundamental Sciences (IPM).}} }

\date{\today}

\title{Rapoport-Zink Spaces For Local $\BP$-Shtukas and Their Local Models\\}

\maketitle

\begin{abstract}
In this article we first survey the analogy between Shimura varieties (resp. Rapoport-Zink spaces) and moduli stacks for global $\FG$-shtukas (resp. Rapooprt Zink spaces for local $\BP$-shtukas).  This part is intended to enrich the dictionary between the arithmetic of number fields and function fields a bit further. Furthermore, to complete this picture, we also study some local properties of Rapoport-Zink spaces for local $\BP$-shtukas by constructing local models for them. This provides a ``local'' complementary to a previous  work of the author, which was devoted to the study of the local models for the moduli stacks of global $\FG$-shtukas. We also discuss some of its applications. 

\noindent
{\it Mathematics Subject Classification (2000)\/}: 
11G09,  
(11G18,  
14L05,  
14M15)  
\end{abstract}

\tableofcontents 

\section*{Introduction}

Recall that Shimura varieties come equipped with many symmetries. The significance of these symmetries is that they encode lots of arithmetic data. Furthermore, for wide range of cases, they appear as moduli spaces for motives, according to Deligne's conception of Shimura varieties \cite{Deligne1} and \cite{Deligne2}. From this perspective, it is expected that the Langlands correspondence will be realized on their cohomology.\\

There is an analogous picture over function fields. Let $\BP$ be a smooth affine group scheme of finite type over $\BD=\BF_q\dbl z\dbr$ with connected reductive generic fiber. A Rapoport-Zink space for local $\BP$-shtukas parametrizes (bounded) local $\BP$-shtukas together with a quasi-isogeny to a fixed local $\BP$-shtuka $\ul\BL$. To justify the naming, recall that local $\BP$-shtukas are function fields analogs for $p$-divisible groups. The Rapoport-Zink spaces are local counterparts of Shimura varieties, which in particular indicates that local Langlands correspondence may eventually be realized on their cohomology.\\ In contrast with number fields side, one may view Rapoport-Zink spaces for local $\BP$-shtukas (resp. moduli stacks of global $\FG$-shtukas) as function fields analogs for Rapoport-Zink spaces for $p$-divisible groups (resp. Shimura varieties).\\ 
The Rapoport-Zink spaces for local $\BP$-shtukas were first constructed and studied in \cite{H-V} for constant group $\BP$, i.e. $\BP=G\times_{\BF_q} \BD$ for a split reductive group $G$ over $\BF_q$, and then have been generalized to the case where $\BP$ is only a smooth affine group scheme over $\BD$ with connected reductive generic fiber in \cite{AH_Local}. \\

In this article we discuss some aspects of the above analogy, including uniformization theory and local model theory.  This part is intended to enrich the dictionary between the arithmetic of number fields and function fields a bit further.
In addition, to complete this picture, we also study local geometry of the Rapoport-Zink spaces for local $\BP$-shtukas. This is done by constructing local models for them; see Theorem \ref{ThmRapoportZinkLocalModel}. From this perspective, this article provides a short ``local'' complementary to \cite{AH_LM}, where the authors established the theory of local models for moduli of global $\FG$-shtukas, both in the sense of Beilinson-Drinfeld-Gaitsgory-Varshavsky, and also in the sense of Rapoport-Zink, in the following general setup. Namely, in \cite{AH_LM} the authors treat the case where $\FG$ is a smooth affine group scheme over a smooth projective curve $C$ over $\BF_q$. Here we also treat the general case, namely, we only assume that $\BP$ is a smooth affine group scheme over $\BD$. It can be easily seen that this assumption can not be weakened further. The local model theorem for Rapoport-Zink spaces for local $\BP$-shtukas has several immediate consequences. We discuss some of these applications in the last chapter \ref{SectApplocations}. Obviously, it clarifies type of singularities in certain cases. It also helps to answer the questions related to flatness. Recall that although the flatness of Rapoport-Zink spaces had been expected by Rapoport and Zink in \cite{RZ}, it was later observed that the flatness might fail in general. According to this observation, they introduced the notation $\textbf{M}^{naive}$ for the corresponding local model. To achieve the flatness, one should apply certain modifications which leads to the construction of the true local model $\LM$ inside $\textbf{M}^{naive}$. For the corresponding question for the moduli stacks of global $\FG$-shtukas see \cite[Chapter~4]{AH_LM}. Similarly, the local model for Rapoport-Zink spaces for local $\BP$-shtukas provides a criterion for flatness of these spaces over their reflex rings. Apart from local consequences of the local model theory, it has also some global consequences. This is because the local model diagram is defined globally. For example, it can be used to construct Kottwitz-Rapoport stratification on the special fiber of Rapoport-Zink spaces for local $\BP$-shtukas, which is useful for cohomology computations. Moreover, the local model theory can be implemented to study the semi-simple trace of Frobenius on the cohomology of these spaces, namely, by relating it to the better understood semi-simple trace of Frobenius on the cohomology of certain Schubert varieties (given by local boundedness conditions) inside twisted affine flag varieties,  see \cite{HR1}. The theorem can further be applied to produce certain cycles in the cohomology of the generic fiber of the Rapoport-Zink spaces for local $\BP$-shtukas, see section \ref{SectApplocations}.  \\

\section*{Acknowledgment}\label{Acknowledgment}

I warmly thank Urs Hartl for numerous stimulating conversations and insightful explanations, as well as his continues encouragements and support. Also I would like to thank Jakob Scholbach and Mohsen Asgharzadeh for inspiring conversations and valuable comments. 
I profited a lot from lectures given by Sophie Morel at IPM during Winter 2016 and Spring 2017 and wish to express my deep appreciation. This paper grew out of lectures delivered at a conference at IPM (April-May 2017) and for that I would like to thank the organizers and the staff. 

\section{Notation and Conventions}\label{SectNotation and Conventions}

Throughout this article we denote by
\begin{tabbing}

$\genericG_\nu:=\FG\times_C\Spec Q_\nu,$\; \=\kill

$\BF_q$\> a finite field with $q$ elements of characteristic $p$,\\[1mm]
$C$\> a smooth projective geometrically irreducible curve over $\BF_q$,\\[1mm]
$Q:=\BF_q(C)$\> the function field of $C$,\\[1mm]

$\BQ$\> the ring of rational numbers,\\[1mm]

$\BQ_p$\> the field of p-adic numbers for a prime $p\in \BZ$,\\[1mm]

$\BA:=\BA_\BQ$\> the ring of rational adeles associated with $\BQ$,\\[1mm]

$\BA_f$\> the ring of finite adeles,\\[1mm]

$\BA_f^p$\> the ring of finite adeles away from $p$,\\[1mm]

$\BaseOfD$\> a finite field containing $\BF_q$,\\[1mm]

$\wh A:=\BF\dbl z\dbr$\>  the ring of formal power series in $z$ with coefficients in $\BF$ ,\\[1mm]
$\wh Q:=\Quot(\wh A)$\> its fraction field,\\[1mm]

$\nu$\> a closed point of $C$, also called a \emph{place} of $C$,\\[1mm]
$\BF_\nu$\> the residue field at the place $\nu$ on $C$,\\[1mm]

$A_\nu$\> the completion of the stalk $\CO_{C,\nu}$ at $\nu$,\\[1mm]

$Q_\nu:=\Quot(A_\nu)$\> its fraction field,\\[1mm]

$\BD_R:=\Spec R\dbl z \dbr$ \> \parbox[t]{0.79\textwidth}{\Verkuerzung
{
the spectrum of the ring of formal power series in $z$ with coefficients in an $\BaseOfD$-algebra $R$,
}

{}
}

\Verkuerzung
{
\\[1mm]
$\hat{\BD}_R:=\Spf R\dbl z \dbr$ \> the formal spectrum of $R\dbl z\dbr$ with respect to the $z$-adic topology.
}
{}
\end{tabbing}

\noindent
For a formal scheme $\wh S$ we denote by $\Nilp_{\wh S}$ the category of schemes over $\wh S$ on which an ideal of definition of $\wh S$ is locally nilpotent. We  equip $\Nilp_{\wh S}$ with the \'etale topology. We also denote by
\begin{tabbing}
$\genericG_\nu:=\FG\times_C\Spec \wh Q_\nu,$\; \=\kill
$n\in\BN_{>0}$\> a positive integer,\\[1mm]
$\ul \nu:=(\nu_i)_{i=1\ldots n}$\> an $n$-tuple of closed points of $C$,\\[1mm]
$\BA_C^\ul\nu$\> the ring of rational adeles of $C$ outside $\ul\nu$,\\[1mm]
$A_\ul\nu$\> the completion of the local ring $\CO_{C^n,\ul\nu}$ of $C^n$ at the closed point $\ul\nu=(\nu_i)$,\\[1mm]

$\Nilp_{A_\ul\nu}:=\Nilp_{\Spf A_\ul\nu}$\> \parbox[t]{0.79\textwidth}{the category of schemes over $C^n$ on which the ideal defining the closed point $\ul\nu\in C^n$ is locally nilpotent,}\\[2mm]
$\Nilp_{\BaseOfD\dbl\zeta\dbr}$\lang{$:=\Nilp_{\hat\BD}$}\> \parbox[t]{0.79\textwidth}{the category of $\BD$-schemes $S$ for which the image of $z$ in $\CO_S$ is locally nilpotent. We denote the image of $z$ by $\zeta$ since we need to distinguish it from $z\in\CO_\BD$.}\\[2mm]
$\FG$\> a smooth affine group scheme of finite type over $C$,\\[1mm]
$\BP_\nu:=\FG\times_C\Spec  A_\nu,$ \> the base change of $\FG$ to $\Spec A_\nu$,\\[1mm]
$\genericG_\nu:=\FG\times_C\Spec Q_\nu,$ \> the generic fiber of $\BP_\nu$ over $\Spec Q_\nu$,\\[1mm]

$\BP$\> a smooth affine group scheme of finite type over $\BD=\Spec\BaseOfD\dbl z\dbr$,\\[1mm] 
$\genericG$\> the generic fiber of $\BP$ over $\Spec\BaseOfD\dpl z\dpr$.
\end{tabbing}

\noindent
Let $S$ be an $\BF_q$-scheme and consider an $n$-tuple $\ul s:=(s_i)_i\in C^n(S)$. We denote by $\Gamma_\ul s$ the union $\bigcup_i \Gamma_{s_i}$ of the graphs $\Gamma_{s_i}\subseteq C_S$. \\

\noindent
For an affine closed subscheme $Z$ of $C_S$ with sheaf $\CI_Z$ we denote by $\BD_S(Z)$ the scheme obtained by taking completion along $Z$\forget{ and by $\BD_{S,n}(Z)$ the closed subscheme of $\BD_S(Z)$ which is defined by $\CI_Z^n$}. Moreover we set $\dot{\BD}_S(Z):=\BD_S(Z)\times_{C_S} (C_S\setminus Z)$.\\

\noindent
We denote by $\sigma_S \colon  S \to S$ the $\BF_q$-Frobenius endomorphism which acts as the identity on the points of $S$ and as the $q$-power map on the structure sheaf. Likewise we let $\hat{\sigma}_S\colon S\to S$ be the $\BaseOfD$-Frobenius endomorphism of an $\BaseOfD$-scheme $S$. We set
\begin{tabbing}
$\genericG_\nu:=\FG\times_C\Spec Q_\nu,$\; \=\kill
$C_S := C \times_{\Spec\BF_q} S$ ,\> and \\[1mm]
$\sigma := \id_C \times \sigma_S$.
\end{tabbing}

 Assume that the generic fiber $\genericG$ of $\BP$ over $\Spec\BaseOfD\dpl z\dpr$ is connected reductive. Consider the base change $\genericG_L$ of $\genericG$ to $L=\BaseOfD^\alg\dpl z\dpr$. Let $S$ be a maximal split torus in $\genericG_L$ and let $T$ be its centralizer. Since $\BaseOfD^\alg$ is algebraically closed, $\genericG_L$ is quasi-split and so $T$ is a maximal torus in $\genericG_L$. Let $N = N(T)$ be the normalizer of $T$ and let $\CT^0$ be the identity component of the N\'eron model of $T$ over $\CO_L=\BaseOfD^\alg\dbl z\dbr$.

The \emph{Iwahori-Weyl group} associated with $S$ is the quotient group $\wt{W}= N(L)\slash\CT^0(\CO_L)$. It is an extension of the finite Weyl group $W_0 = N(L)/T(L)$ by the coinvariants $X_\ast(T)_I$ under $I=\Gal(L^\sep/L)$:
$$
0 \to X_\ast(T)_I \to \wt W \to W_0 \to 1.
$$
By \cite[Proposition~8]{H-R} there is a bijection
\begin{equation}\label{EqSchubertCell}
L^+\BP(\BaseOfD^\alg)\backslash L\genericG(\BaseOfD^\alg)/L^+\BP(\BaseOfD^\alg) \isoto \wt{W}^\BP  \backslash \wt{W}\slash \wt{W}^\BP
\end{equation}
where $\wt{W}^\BP := (N(L)\cap \BP(\CO_L))\slash \CT^0(\CO_L)$, and where $LP(R)=P(R\dbl z\dbr)$ and $L^+\BP(R)=\BP(R\dbl z\dbr)$ are the loop group, resp.\ the group of positive loops of $\BP$; see \cite[\S\,1.a]{PR2}, or \cite[\S4.5]{B-D}, \cite{Ngo-Polo} and \cite{Faltings03} when $\BP$ is constant. Let $\omega\in \wt{W}^\BP\backslash \wt{W}/\wt{W}^\BP$ and let $\BaseOfD_\omega$ be the fixed field in $\BaseOfD^\alg$ of $\{\gamma\in\Gal(\BaseOfD^\alg/\BaseOfD)\colon \gamma(\omega)=\omega\}$. There is a representative $g_\omega\in L\genericG(\BaseOfD_\omega)$ of $\omega$; see \cite[Example~4.12]{AH_Local}. The \emph{Schubert variety} $\CS(\omega)$ associated with $\omega$ is the ind-scheme theoretic closure of the $L^+\BP$-orbit of $g_\omega$ in $\SpaceFl_\BP\whtimes_{\BaseOfD}\BaseOfD_\omega$. It is a reduced projective variety over $\BaseOfD_\omega$. For further details see \cite{PR2} and \cite{Richarz}.

\section{Preliminaries}\label{Preliminaries}

Let $\BaseOfD$ be a finite field and $\BaseOfD\dbl z\dbr$ be the power series ring over $\BaseOfD$ in the variable $z$. We let $\BP$ be a smooth affine group scheme over $\BD:=\Spec\BaseOfD\dbl z\dbr$ with connected fibers. Set $\dot\BD:=\BF\dpl z\dpr$.

\begin{definition}\label{DefLoopGps}
The \emph{group of positive loops associated with $\BP$} is the infinite dimensional affine group scheme $L^+G$ over $\BaseOfD$ whose $R$-valued points for an $\BaseOfD$-algebra $R$ are 
\[
L^+\BP(R):=\BP(R\dbl z\dbr):=\BP(\BD_R):=\Hom_\BD(\BD_R,\BP)\,.
\]
The \emph{group of loops associated with $P$} is the $\fpqc$-sheaf of groups $LP$ over $\BaseOfD$ whose $R$-valued points for an $\BaseOfD$-algebra $R$ are 
\[
LP:=P(R\dpl z\dpr):=P(\dot{\BD}_R):=\Hom_{\dot\BD}(\dot\BD_R,P)\,,
\]
where we write $R\dpl z\dpr:=R\dbl z \dbr[\frac{1}{z}]$ and $\dot{\BD}_R:=\Spec R\dpl z\dpr$. It is representable by an ind-scheme of ind-finite type over $\BaseOfD$; see \cite[\S\,1.a]{PR2}, or \cite[\S4.5]{B-D}, \cite{Ngo-Polo}, \cite{Faltings03} when $\BP$ is constant.
Let $\scrH^1(\Spec \BaseOfD,L^+\BP)\,:=\,[\Spec \BaseOfD/L^+\BP]$ (respectively $\scrH^1(\Spec \BaseOfD,LP)\,:=\,[\Spec \BaseOfD/LP]$) denote the classifying space of $L^+\BP$-torsors (respectively $LP$-torsors). It is a stack fibered in groupoids over the category of $\BaseOfD$-schemes $S$, whose category $$\scrH^1(\Spec \BaseOfD,L^+\BP)(S)$$ consists of all $L^+\BP$-torsors (resp.\ $LP$-torsors) on $S$. The inclusion of sheaves $L^+\BP\subset LP$ gives rise to the natural 1-morphism 
\begin{equation}\label{EqLoopTorsor}
\scrH^1(\Spec \BaseOfD,L^+\BP)\longto \scrH^1(\Spec \BaseOfD,LP),~\CL_+\mapsto \CL\,.
\end{equation}
\end{definition}

\begin{definition}
The affine flag variety $\SpaceFl_\BP$ is defined to be the ind-scheme representing the $fpqc$-sheaf associated with the presheaf
$$
R\;\longmapsto\; L\genericG(R)/L^+\BP(R)\;=\;P\left(R\dpl z \dpr \right)/\BP \left(R\dbl z\dbr\right).
$$ 
on the category of $\BaseOfD$-algebras; compare Definition~\ref{DefLoopGps}.
\end{definition}

\begin{remark}\label{RemFlagisquasiproj}
 Note that $\SpaceFl_\BP$ is ind-quasi-projective over $\BaseOfD$ according to Pappas and Rapoport \cite[Theorem~1.4]{PR2}, and hence ind-separated and of ind-finite type over $\BaseOfD$. The quotient morphism $LP \to \SpaceFl_\BP$ admits sections locally for the \'etale topology.\forget{They proceed as follows. When $\BP = \SL_{r,\BD}$, the \fpqc-sheaf $\CF\ell_\BP$ is called the \emph{affine Grassmanian}. It is an inductive limit of projective schemes over $\BF$, that is, ind-projective over $\BF$; see \cite[Theorem~4.5.1]{B-D} or \cite{Faltings03, Ngo-Polo}.  By \cite[Proposition~1.3]{PR2} and \cite[Proposition~2.1]{AH_Global} there is a faithful representation $\BP ֒\to \SL_r$ with quasi-affine quotient. Pappas and Rapoport show in the proof of \cite[Theorem~1.4]{PR2} that $\CF\ell_\BP \to \CF\ell_{\SL_r}$ is a locally closed embedding, and moreover, if $\SL_r /\BP$ is affine, then $\CF\ell_\BP \to \CF\ell_{\SL_r}$ is even a closed embedding and $\CF\ell_\BP$ is ind-projective.} Moreover, if the fibers of $\BP$ over $\BD$ are geometrically connected, then $\CF\ell_\BP$ is ind-projective if and only if $\BP$ is a parahoric group scheme in the sense of Bruhat and Tits \cite[D\'efinition 5.2.6]{B-T}; see \cite[Theorem A]{Richarz13}.

\end{remark}

\noindent
Recall that for p-divisible groups $X$ and $Y$, one defines the following

\begin{enumerate}
\item
An \emph{isogeny} $f:X\to Y$ is a morphism which is an epimorphism as \fppf-sheaves and whose kernel is representable by a finite flat group scheme over $S$.
\item
A \emph{quasi-isogeny} is a global section $f$ of the Zariski sheaf $\Hom(X,Y)\otimes_\BZ \BQ$  such that $n\cdot f$ is an isogeny locally on $S$, for an integer $n\in\BZ$. 
\end{enumerate}

\noindent
Let us now recall the analogous definition over function fields, where we instead have \emph{local $\BP$-shtukas} and \emph{quasi-isogenies} between them.

\begin{definition}\label{localSht}
\begin{enumerate}
\item
A local $\BP$-shtuka over $S\in \Nilp_{\BaseOfD\dbl\zeta\dbr}$ is a pair $\ul \CL = (\CL_+,\tau)$ consisting of an $L^+\BP$-torsor $\CL_+$ on $S$ and an isomorphism of the associated loop group torsors $\tauLoc\colon  \hat{\sigma}^\ast \CL \to\CL$. 

\item
A \emph{quasi-isogeny} $f\colon\ul\CL\to\ul\CL'$ between two local $\BP$-shtukas $\ul{\CL}:=(\CL_+,\tau)$ and $\ul{\CL}':=(\CL_+' ,\tau')$ over $S$ is an isomorphism of the associated $LP$-torsors $f \colon  \CL \to \CL'$ such that the following diagram  

\[
\xymatrix {
\hat{\sigma}^\ast\CL \ar[r]^{\tau} \ar[d]_{\hat{\sigma}^\ast f} & \CL\ar[d]^f  \\
\hat{\sigma}^\ast\CL' \ar[r]^{\tau'}&  \CL' \;.
}
\]

becomes commutative.
\item
We denote by $\QIsog_S(\ul{\CL},\ul{\CL}')$ the set of quasi-isogenies between $\ul{\CL}$ and $\ul{\CL}'$ over $S$. 

\item
We let $Loc-\BP-Sht(S)$ denote the category of local $\BP$-shtukas over $S$ with quasi-isogenies as the set of morphisms. 

\end{enumerate}
\end{definition}

Let us recall that quasi-isogenies of p-divisible groups are rigid in the following sense. Let $X$ and $Y$ be p-divisible groups over $S$. Let $\ol S\to S$ be a nilpotent thickening, i.e. a closed immersion defined by a nilpotent sheaf of ideal. Then, the restriction $QIsog_S(X,Y)\to QIsog_{\ol S}(\ol X,\ol Y)$ between the set of quasi-isogenies is a bijection.  
\bigskip

\noindent
Likewise, local $\BP$-shtukas enjoy a similar rigidity property.

\bigskip

\begin{proposition}[Rigidity of quasi-isogenies for local $\BP$-shtukas] \label{PropRigidityLocal}
Let $S$ be a scheme in $\Nilp_{\BaseOfD\dbl\zeta\dbr}$ and
let $j \colon  \bar{S}\rightarrow S$ be a closed immersion defined by a sheaf of ideals $\CI$ which is locally nilpotent.
Let $\ul{\CL}$ and $\ul{\CL}'$ be two local $\BP$-shtukas over $S$. Then
$$
\QIsog_S(\ul{\CL}, \ul{\CL}') \longto \QIsog_{\bar{S}}(j^*\ul{\CL}, j^*\ul{\CL}') ,\quad f \mapsto j^*f
$$
is a bijection of sets.
\end{proposition}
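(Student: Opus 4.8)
The plan is to reduce the statement to the analogous rigidity property for $L^+\BP$-torsors (or more precisely, for isomorphisms of $LP$-torsors respecting Frobenius structures), which in turn rests on the classical fact that nilpotent thickenings do not change the category of finite étale morphisms and, more relevantly here, that the loop group torsors themselves are insensitive to such thickenings in a suitable sense. Concretely, a quasi-isogeny $f\colon \ul\CL\to\ul\CL'$ is the datum of an isomorphism $f\colon\CL\to\CL'$ of the associated $LP$-torsors satisfying the compatibility $f\circ\tau=\tau'\circ\hat\sigma^\ast f$. Restriction along $j$ is clearly a map of sets, so the content is injectivity and surjectivity.

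For injectivity: suppose $f_1,f_2$ are quasi-isogenies over $S$ with $j^\ast f_1=j^\ast f_2$. Then $g:=f_2^{-1}\circ f_1$ is a quasi-isogeny from $\ul\CL$ to itself restricting to the identity over $\bar S$, and I want to conclude $g=\id$. Working étale-locally on $S$, I may trivialize the torsors, so $g$ becomes an element of $LP(S)=\genericG(\CO_S\dpl z\dpr)$ commuting with the (twisted) Frobenius in the appropriate sense; the condition becomes $g=\tau\cdot\hat\sigma^\ast(g)\cdot\tau^{-1}$, i.e.\ $g$ is fixed by a $\hat\sigma$-semilinear operator, while $g\equiv 1$ modulo the nilpotent ideal $\CI$. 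Writing $g=1+x$ with $x$ a section supported on $\CI$-torsion, the fixed-point equation forces $x=\tau\cdot\hat\sigma^\ast(x)\cdot\tau^{-1}$ up to higher-order terms; since $\hat\sigma$ raises entries to the $q$-th power and the ideal is nilpotent, iterating this identity (each application lands in $\CI^q\subseteq\CI^2$, then $\CI^{q^2}$, etc.) drives $x$ into $\bigcap_n\CI^n=0$. Hence $g=1$, giving injectivity. The same type of successive-approximation argument, run in the other direction, yields surjectivity: given $\bar f$ over $\bar S$, one lifts it order by order along the filtration of $S$ by the powers of $\CI$, at each stage the obstruction to extending the Frobenius-compatibility lies in a group killed by $\hat\sigma$ (again by the $q$-power phenomenon) and the $q$-power map on a nilpotent ideal is eventually zero, so the lift exists and is forced; assembling the stages produces $f$ with $j^\ast f=\bar f$.

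I expect the main obstacle to be bookkeeping rather than anything conceptual: making the "solve the equation $g=\tau\,\hat\sigma^\ast(g)\,\tau^{-1}$ modulo a nilpotent ideal" argument rigorous at the level of $LP$-torsors, where one cannot globally trivialize, requires either passing to an étale cover and checking the local solutions glue (which they do, by the uniqueness half), or phrasing the whole thing via the deformation theory of the pair $(\CL,\tau)$. The cleanest route is probably: first prove the statement after an étale base change where $\CL$ and $\CL'$ become trivial, reducing to the elementary $\hat\sigma$-semilinear algebra over $R\dpl z\dpr$ with $R$ an $\CI$-thickening; then invoke étale descent (the solution is unique, hence descends) to globalize. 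One should also note that the $z$-adic and $\CI$-adic topologies interact harmlessly here because $S\in\Nilp_{\BaseOfD\dbl\zeta\dbr}$ already forces $\zeta$ nilpotent, so all the relevant rings are handled by a single nilpotence/completeness argument. This is exactly the function-field transcription of the Drinfeld rigidity argument for quasi-isogenies of $p$-divisible groups, and I would expect the proof in the paper to cite or mimic the corresponding statement in \cite{H-V} or \cite{AH_Local}.
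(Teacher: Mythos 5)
Your proposal is essentially correct, and you correctly guessed the punchline: the paper gives no argument at all here, its ``proof'' being the single line ``See \cite[Proposition 2.11]{AH_Local}.'' So the only meaningful comparison is with the argument in that reference, which your sketch does reproduce in substance. Your injectivity argument is exactly right: after an \'etale trivialization the difference $g=f_2^{-1}f_1$ satisfies $g=\tau\,\hat\sigma^\ast(g)\,\tau^{-1}$ with $g\equiv 1$ modulo $\CI$, and since the $q$-power map sends $\CI$ into $\CI^{q}$, iterating drives $g$ into $1$ modulo $\CI^{q^n}$ for all $n$, hence $g=1$ by local nilpotence; uniqueness then gives the \'etale descent for free. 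For surjectivity, the reference uses a slicker packaging of the same $q$-power phenomenon that you may want to adopt: reducing to the case $\CI^{q}=0$, the Frobenius $\hat\sigma_S$ kills $\CI$ and therefore factors as $S\xrightarrow{\,h\,}\bar S\xrightarrow{\,j\,}S$, so that $\hat\sigma^\ast\CL=h^\ast j^\ast\CL$ and one can simply \emph{define} the lift by the closed formula $f:=\tau'\circ h^\ast\bar f\circ\tau^{-1}$, whose compatibility with $\tau,\tau'$ and whose restriction to $\bar S$ are immediate. This replaces your order-by-order lifting, where the phrase ``the obstruction lies in a group killed by $\hat\sigma$'' is the one genuinely underspecified step in your write-up; the factorization trick is precisely the rigorous form of that remark, and it is the same mechanism as in Drinfeld's rigidity argument for quasi-isogenies of $p$-divisible groups, as you note.
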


\begin{proof} See \cite[Proposition 2.11]{AH_Local}.

\end{proof}

In the rest of this section we want to recall the notion of \emph{local boundedness condition}, introduced in \cite[Definition~4.8]{AH_Local}.\\

\noindent
Fix an algebraic closure $\BaseOfD\dpl\zeta\dpr^\alg$ of $\BaseOfD\dpl\zeta\dpr$. For a finite extensions of discrete valuation rings $R/\BaseOfD\dbl\zeta\dbr$ with $R\subset\BaseOfD\dpl\zeta\dpr^\alg$, we denote by $\kappa_R$ its residue field, and we let $\Nilp_R$ be the category of $R$-schemes on which $\zeta$ is locally nilpotent. We also set $\wh{\SpaceFl}_{\BP,R}:=\SpaceFl_\BP\whtimes_{\BaseOfD}\Spf R$ and $\wh{\SpaceFl}_\BP:=\wh{\SpaceFl}_{\BP,\BaseOfD\dbl\zeta\dbr}$. Before we can define (local) ``boundedness condition'', let us recall that $\wh{\SpaceFl}_{\BP,R}$ can be viewed as an unbounded Rapoport-Zink spaces for local $\BP$-shtukas. We explain this in the next section. For now, consider the following functor

\begin{eqnarray*}
\ul\CM:&(\Nilp_R)^o &\longto  \Sets  \hspace{7cm}\vspace{-2mm}\\ 
&\SSS &\longmapsto  \big\{\text{Isomorphism classes of }(\CL_+,\delta);\text{where: }\\  
& &~~~~~~~~~~~~~~~~~~~~~~~~~~~~-\CL_+~\text{is an $L^+\BP$-torsor over $\SSS$ and}\\ \nonumber
& &  ~~~~~~~~~~~~~~~~~~~~~~~~~~~~ -\text{a trivialization $\delta\colon  \CL \to LP_S$ of the}\\ \nonumber
& & ~~~~~~~~~~~~~~~~~~~~~~~~~~~~~~~~~\text{associated loop torsors}
\big\}. 
\end{eqnarray*}

\begin{proposition}\label{PropFlRepUnBoundedRZ}
The ind-scheme $\wh{\CF\ell}_{\BP,R}$ pro-represents the above functor.

\end{proposition}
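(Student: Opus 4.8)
The plan is to unwind both sides of the claimed identification and check that a choice of trivialization $\delta$ is exactly the data needed to rigidify an $L^+\BP$-torsor into a point of the affine flag variety, compatibly over $\Spf R$. First I would note that by construction $\SpaceFl_\BP$ represents the $\fpqc$-sheafification of $R'\mapsto L\genericG(R')/L^+\BP(R')$, hence for any $R$-scheme $S$ on which $\zeta$ is nilpotent, an $S$-point of $\wh{\SpaceFl}_{\BP,R}=\SpaceFl_\BP\whtimes_\BaseOfD\Spf R$ is, $\fpqc$-locally on $S$, represented by an element $g\in L\genericG(S)$, two such $g,g'$ giving the same point iff $g'=g h$ with $h\in L^+\BP(S)$; and this glues to a global datum. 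I would then package this in torsor language, which is the form the functor $\ul\CM$ uses: to $g$ one associates the trivial $L^+\BP$-torsor $\CL_+:=L^+\BP_S$ together with the trivialization $\delta$ of the associated $LP$-torsor $\CL=LP_S$ given by left multiplication by $g^{-1}$ (or $g$; I will fix the convention so that the change-of-$g$ rule matches the $L^+\BP$-action on the right). Conversely, given a pair $(\CL_+,\delta)$ with $\CL_+$ an $L^+\BP$-torsor and $\delta\colon\CL\isoto LP_S$ a trivialization of the associated loop torsor, one can choose $\fpqc$-locally (indeed étale-locally, by Remark~\ref{RemFlagisquasiproj}) a trivialization $\beta\colon\CL_+\isoto L^+\BP_S$; then $\delta\circ(L\beta)^{-1}$ is an automorphism of $LP_S$, i.e. an element $g\in L\genericG(S)$, well-defined up to right multiplication by $L^+\BP(S)$ coming from the ambiguity in $\beta$. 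This produces the point of $\SpaceFl_\BP$, and one checks it is independent of the local choices and descends.

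The key steps, in order, are: (1) spell out the torsor-theoretic description of $\wh{\SpaceFl}_{\BP,R}(S)$ as just sketched, being careful that the passage from the presheaf quotient to its sheafification is harmless because we may always work $\fpqc$-locally and the resulting data glue; (2) define the two natural transformations $\wh{\SpaceFl}_{\BP,R}\to\ul\CM$ and $\ul\CM\to\wh{\SpaceFl}_{\BP,R}$ by the recipes above; (3) verify they are mutually inverse — in one direction, starting from $g$, building $(L^+\BP_S,\,\text{left mult.\ by }g)$, and recovering $g$ up to $L^+\BP(S)$, which is exactly the equivalence relation defining $\SpaceFl_\BP$; in the other direction, starting from $(\CL_+,\delta)$, the local element $g$ reconstructs $\CL_+$ as the trivial torsor glued by the transition functions of $\CL_+$ and $\delta$ back as left multiplication; (4) note these constructions are compatible with base change in $S$ (functoriality in $\Nilp_R$) and respect the formal structure, i.e. are compatible with the $\whtimes_\BaseOfD\Spf R$, since $\zeta$ being locally nilpotent on $S$ is the only constraint and it is preserved. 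I would also remark that this is essentially the $R$-linear, formal-completion version of the well-known identification for $R=\BaseOfD\dbl\zeta\dbr$ already used implicitly in \cite{AH_Local}, so one may alternatively cite the constant/untwisted case and base-change.

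The main obstacle I expect is bookkeeping rather than conceptual: making the bijection between ``an $L^+\BP$-torsor plus a trivialization of its loop torsor'' and ``an element of $L\genericG$ modulo $L^+\BP$'' precise at the level of $S$-points when $\CL_+$ is not itself trivial — one must genuinely use that $L^+\BP$-torsors (equivalently, that $LP\to\SpaceFl_\BP$, hence $L^+\BP$-bundles) are étale-locally trivial, and then check that the recipe glues and is independent of the trivialization $\beta$; the cocycle computation showing the two ambiguities (choice of $\beta$ versus right $L^+\BP(S)$-action) match up is the heart of it. A secondary, minor point is to confirm that the isomorphism classes appearing in the target of $\ul\CM$ correspond bijectively to $S$-points and not to something coarser — i.e. that there are no nontrivial automorphisms of a pair $(\CL_+,\delta)$, which follows because an automorphism must fix $\delta$ and hence be the identity on $\CL$, forcing it to be the identity on $\CL_+\subset\CL$. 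Once these are in place the identification of functors, and therefore the pro-representability statement, is immediate.
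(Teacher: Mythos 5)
Your proposal is correct and follows essentially the same route as the paper: in both directions one passes through a local trivialization of $\CL_+$ (\'etale- or $\fppf$-locally), identifies $\delta$ with an element $g'\in L\genericG(S')$ well defined up to the right $L^+\BP$-action, and checks independence of choices and descent to obtain the identification with $\wh{\SpaceFl}_{\BP,R}(S)$. The only cosmetic difference is that you spell out the cocycle bookkeeping and the absence of automorphisms of $(\CL_+,\delta)$ more explicitly than the paper does.
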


\begin{proof}
In order to illustrate how the representablity works, here we briefly sketch the proof, and we refer the reader to \cite[Theorem~4.4.]{AH_Local} for further details. We assume that $R=\BF\dbl\zeta\dbr$. Consider a pair $(\CL_+,\delta)\in \ul\CM(S)$. Choose an \fppf-covering $S' \to S$ which trivializes $\CL_+$, then the morphism $\delta$ is given by an element $g' \in LP(S')$. The image of the element $g' \in LP(S')$ under  $LP(S')\to\wh{\CF\ell}_\BP (S')$ is independent of the choice of the trivialization, and since $(\CL_+,\delta)$ is defined over $S$, it descends to a point $x \in \wh{\CF\ell}_\BP$. 

Conversely let $x$ be in $\wh{\CF\ell}_\BP(S)$, for a scheme $S\in \Nilp_{\BF\dbl\zeta\dbr}$. The projection morphism $LP \to \CF\ell_\BP$ admits local sections for the \'etale topology by \cite[Theorem~1.4]{PR2}. Hence over an \'etale covering $S' \to S$ the point $x$ can be represented by an element $g'\in LP(S')$. We let $(\CL_+',\delta')=((L^+\BP)_{S'},g')$. It can be shown that it descends and gives $(\CL_+,\delta)$ over $S$.
\end{proof}

\noindent
Here we recall the definition of local boundedness condition from \cite[Definition~4.8]{AH_Local}.

\begin{definition}\label{DefLBC}

\begin{enumerate}
\item\label{DefBDLocal_A}
 For a finite extension of discrete valuation rings $\BaseOfD\dbl\zeta\dbr\subset R\subset\BaseOfD\dpl\zeta\dpr^\alg$ we consider closed ind-subschemes $\wh Z_R\subset\wh{\SpaceFl}_{\BP,R}$. We call two closed ind-subschemes $\wh Z_R\subset\wh{\SpaceFl}_{\BP,R}$ and $\wh Z'_{R'}\subset\wh{\SpaceFl}_{\BP,R'}$ \emph{equivalent} if there is a finite extension of discrete valuation rings $\BaseOfD\dbl\zeta\dbr\subset\wt R\subset\BaseOfD\dpl\zeta\dpr^\alg$ containing $R$ and $R'$ such that $\wh Z_R\whtimes_{\Spf R}\Spf\wt R \,=\,\wh Z'_{R'}\whtimes_{\Spf R'}\Spf\wt R$ as closed ind-subschemes of $\wh{\SpaceFl}_{\BP,\wt R}$.

\medskip\noindent

\item\label{DefBDLocal_B} Let $\wh Z=[\wh Z_R]$ be an equivalence class in the above sense. The \emph{reflex ring} $R_{\hat{Z}}$ is defined as the intersection of the fixed field of $\{\gamma\in\Aut_{\BaseOfD\dbl\zeta\dbr}(\BaseOfD\dpl\zeta\dpr^\alg)\colon \gamma(\wh Z)=\wh Z\,\}$ in $\BaseOfD\dpl\zeta\dpr^\alg$ with all the finite extensions $R\subset\BaseOfD\dpl\zeta\dpr^\alg$ of $\BaseOfD\dbl\zeta\dbr$ over which a representative $\wh Z_R$ of $\wh Z$ exists.
\item \label{DefBDLocal_C}
We define a (local) \emph{bound} to be an equivalence class $\wh Z:=[\wh Z_R]$ of closed ind-subschemes $\wh Z_R\subset\wh{\SpaceFl}_{\BP,R}$, such that \\

\begin{enumerate}

\item[-] all the ind-subschemes $\wh Z_R$ are stable under the left $L^+\BP$-action on $\SpaceFl_\BP$ and\\

\item[-]
 the special fibers $Z_R:=\wh Z_R\whtimes_{\Spf R}\Spec\kappa_R$ are quasi-compact subschemes of the ind scheme $\SpaceFl_\BP\whtimes_{\BaseOfD}\Spec\kappa_R$. 

\end{enumerate}

 Note that $Z_R$ arise by base change from a unique closed subscheme $Z\subset\SpaceFl_\BP\whtimes_\BaseOfD\kappa_{R_{\wh Z}}$. This is because the Galois descent for closed subschemes of $\SpaceFl_\BP$ is effective. We call $Z$ the \emph{special fiber} of the bound $\wh Z$. It is a projective scheme over $\kappa_{R_{\wh Z}}$ by~\cite[Remark 4.3]{AH_Local} and \cite[Lemma~5.4]{H-V}, which implies that every morphism from a quasi-compact scheme to an ind-projective ind-scheme factors through a projective subscheme.
\item \label{DefBDLocal_D}
Let $\wh Z$ be a bound with reflex ring $R_{\wh Z}$. Let $\CL_+$ and $\CL_+'$ be $L^+\BP$-torsors over a scheme $S$ in $\Nilp_{R_{\wh Z}}$ and let $\delta\colon \CL\isoto\CL'$ be an isomorphism of the associated $LP$-torsors. We consider an \'etale covering $S'\to S$ over which trivializations $\alpha\colon\CL_+\isoto(L^+\BP)_{S'}$ and $\alpha'\colon\CL_+'\isoto(L^+\BP)_{S'}$ exist. Then the automorphism $\alpha'\circ\delta\circ\alpha^{-1}$ of $(LG)_{S'}$ corresponds to a morphism $S'\to LG\whtimes_\BaseOfD\Spf R_{\wh Z}$. We say that $\delta$ is \emph{bounded by $\wh Z$} if for any such trivialization and for all finite extensions $R$ of $\BaseOfD\dbl\zeta\dbr$ over which a representative $\wh Z_R$ of $\wh Z$ exists the induced morphism $S'\whtimes_{R_{\wh Z}}\Spf R\to LP\whtimes_\BaseOfD\Spf R\to \wh{\SpaceFl}_{\BP,R}$ factors through $\wh Z_R$. Furthermore we say that a local $\BP$-shtuka $(\CL, \tauLoc)$ is \emph{bounded by $\wh Z$} if the isomorphism $\tauLoc^{-1}$ is bounded by $\wh Z$. Assume that $\wh Z=\CS(\omega)\whtimes_\BaseOfD\Spf \BaseOfD\dbl\zeta\dbr$ for a \emph{Schubert variety} $\CS(\omega)\subseteq \CF\ell_\BP$ , with $\omega\in \wt{W}$; see \cite{PR2}. Then we say that $\delta$ is \emph{bounded by $\omega$}.

\end{enumerate}
\end{definition}

\section{The Analogy between Shimura varieties and moduli of $\FG$-shtukas}\label{SectAnalogy}

In this chapter we discuss some aspects of the analogy between Shimura varieties (resp. Rapoport-Zink spaces) and their function fields counterparts, i.e. the moduli stacks of global $\FG$-shtukas (Rapoport-Zink spaces for local $\BP$-shtukas). In particular we discuss the analogy between the uniformization theory and local model theory for these moduli spaces. Furthermore, we prove the local model theorem in last subsection \ref{SubsectProof} of this chapter.

\subsection{Local Shimura data}\label{SubsectLocalShimuradata}
Recall that a Shimura data $(\BG,X,K)$ consists of
\begin{enumerate}

\item[-]
a reductive group $\BG$ over $\BQ$ with center $Z$,
\item[-]
$\BG(\BR)$-conjugacy class $X$ of homomorphisms $\BS\to G_\BR$ for the Deligne torus $\BS$, 
\item[-]
A compact open sub-group $K\subseteq \BG(\BA_f)$,  
\end{enumerate}
subject to certain conditions; see \cite{MilneShimura}. \\

Let us fix a prime number $p$ and write $K=K_p \cdot K^p$ for compact open subgroups $K_p\subseteq \BG(\BQ_p)$ and $K^p\subseteq \BG(\BA_\BQ^p)$. The above data determine a reflex field $E:=E(\BG,X,K)$, and the corresponding Shimura variety 

$$
 Sh_K(\BG,X)=\BG(\BQ)\backslash\bigl(X\times \BG(\BA_f)/K\bigr),
$$
\noindent  
which admits a canonical integral model $\scrS_K$ over \forget{$W= W(\ol\BF_p)$}reflex ring $\CO_E$\forget{ for $K=G(\BZ_p)K^p$}, for sufficiently small $K^p\subseteq \BG(\BA_\BQ^p)$; see \cite{Kis}. \\

Shimura varieties have local counterparts, which are called \emph{Rapoport-Zink spaces}. They arise from \emph{local Shimura data} and, roughly speaking, parametrize families of quasi-isogenies of $p$-divisible groups to a fixed one.  Let us explain it a bit further. 

\begin{definition}\label{DefLSD}
A \emph{local Shimura data}  is a tuple $(\CP, \{\mu\}, [b])$ consisting of

\begin{enumerate}

\item[-]
a smooth affine group scheme $\CP$ over $\BZ_p$ with a connected reductive generic fiber $G$ over $\BQ_p$,
\item[-]
a conjugacy class of a (minuscule) cocharacter $\mu : \BG_m \to G$,

\item[-]
a class $[b]$ in $B(G, \mu)$ of Kottwitz set of $\sigma$-conjugacy classes.

\end{enumerate}

\end{definition}

In particular the local Shimura data $(\CP, \{\mu\}, [b])$ determines the reflex field $E:=E_\mu$, which is the field of definition of the cocharacter $\mu$. We set $\CO=\CO_{E}$.\\
Assume that $G$ splits over a tamely ramified extension and $\CP$ is parahoric (i.e. the special fiber $\CP_s=\CP_{\BF_p}$ is connected). To such a data one associates a formal  scheme $\check{\CM}:=\check{\CM}(\CP, [b], \{\mu\})$ over $\CO$ (that up to some modifications is a moduli space for p-divisible groups with additional structures). This is called \emph{Rapoport-Zink space}, associated to the local Shimura
data $(\CP, [b], \{\mu\})$. The underlying scheme $\check{\CM}_{\red}$ is a union of affine Deligne-Lusztig varieties. For details we refer the reader to \cite{RV} and \cite{SchWei}. See also \cite{Kim} and Shen \cite{Shen} for generalizations to the Rapoport-Zink spaces of Hodge type and abelian type, respectively.

\forget{This space was constructed by Rapoport and Zink for Shimura varieties of PEL-type \cite{RZ} and further generalized to Shimura varieties of Hodge type by Kim \cite{Kim} and abelian type by Shen \cite{Shen}. }

\bigskip
\noindent
\subsection{Local Models for Rapoport-Zink spaces}\label{SubsectLocalModelsforR-Zspaces}
In \cite{RZ} the authors propose the following method to study local properties of Rapoport-Zink spaces.

\begin{definition}
Let $F/\BQ_p$ be a finite extension. A local model triple $(G,\{\mu\},\CP)$ over $F$ consists of the following data
\begin{enumerate}

\item[-]
$G$ is a reductive group over $F$,
\item[-]
$\{\mu\}$ is a conjugacy class of a cocharacter of $G$,
\item[-]
$\CP$ is a parahoric group scheme over $\CO_F$.

\end{enumerate}
 
 \end{definition}

We assume that $G$ splits over a tamely ramified extension. One can associate to a local model data $(G,\{\mu\},\CP)$, a variety $\scrA:=\scrA(G,\{\mu\},\CP)$ over $k:=\ol\kappa_E$ inside the affine flag variety $\CF\ell_{\CP,k}$, with an action of $\CP\otimes_{\CO_F} k$. More explicitly $\scrA$ is the union $\cup_{\omega\in Adm_{\CP}(\mu)} S(\omega)$ of affine Schubert varieties $S(\omega)$. Here $$Adm_{\CP}(\mu)=\{\omega\in \wt{W}^\CP\backslash\wt{W}/\wt{W}^\CP; \omega \preceq t^\mu  \}.$$

\begin{definition}
A local model $\LM:=\LM(G,\{\mu\},\CP)$ attached to a local model data $(G,\{\mu\},\CP)$ is a projective scheme $\LM$ over $\CO_E$, with generic fiber $\textbf{M}_\eta^{loc}$ (resp. special fiber $\textbf{M}_s^{loc}$) with an action of $\CP\otimes_{\CO_F}\CO_E$, subject to the following conditions

\begin{enumerate}

\item[-]
It is flat over $\CO_E$ with reduced special fiber,
\item[-]
There is a $\CP\otimes k$-equivariant isomrphism $\textbf{M}_s^{loc}\cong \scrA$, 
\item[-]
There is a $G_E$-equivariant isomrphism $\textbf{M}_\eta^{loc}\cong G_E/P_{\{\mu\}}$, 

\end{enumerate}
 
\end{definition}

Note that in particular all irreducible components of $\LM\otimes k$ are normal and Cohen-Macaulay, see \cite[Theorem~8.4]{PR2}.
\noindent
The existence and uniqueness of $\LM$ is known for $EL$ and $PEL$ cases. In general only the existence is known according to \cite{PZ} and uniqueness is not known; also compare \cite[Proposition~18.3.1]{SchWei}, where the authors use an alternative definition and then they serve uniqueness but not the existence! \\
\noindent
According to the local model theory there is 
a local model roof

\begin{equation}\label{nablaHRoof} 
\xygraph{
!{<0cm,0cm>;<1cm,0cm>:<0cm,1cm>::}
!{(0,0) }*+{\wt{\CM}}="a"
!{(-1.5,-1.5) }*+{\check{\CM}}="b"
!{(1.5,-1.5) }*+{\LM,}="c"
"a":_{\pi}"b" "a":^{\pi^{loc}}"c"
}  
\end{equation}
\noindent
where $\pi:\wt{\CM}\to\check{\CM}$ is a $\CP$-torsor and $\pi^{loc}$ is formally smooth of relative dimension $\dim \CP$. In particular for every $x\in\check{\CM}(k)$ there is a $y\in\LM(k)$ with $\wh{\CO}_{\check{\CM},x}\cong\wh{\CO}_{\LM,y}$.


\bigskip

\noindent
\subsection{Points of Shimura varieties mod p and uniformization theory}\label{Subsect Points of Shimura varieties mod p and uniformization theory}

As we mentioned above, the Rapoport-Zink spaces are local counterparts for Shimura varieties. In particular their $\ell$-adic cohomology is supposed to eventually realize the local Langlands correspondence, according to a conjecture of Kottwitz \cite{Rap94}.\\

 The geometry of Shimura varieties and Rapoport-Zink spaces are related through local model theory and \emph{uniformization theory}. The underlying facts which play crucial role here are as follows
 
 \begin{enumerate}
\item[(\textasteriskcentered)]
The deformation space of an abelian variety is completely ruled by associated crystal (according to Grothendieck-Messing theory), and \\
\item[(\textasteriskcentered\textasteriskcentered)]
one can pull back an abelian variety $\CA$ (resp. motive $\CM$) along a quasi-isogeny of $p$-divisible groups $\rho: \CX\to \CA[p^\infty]$ (associated crystalline realization). \\ 

\end{enumerate}

Let us assume that the local Shimura data $(\CP, [b_\phi], \{\mu\})$ arise from the integral model $\scrS_K$ for the Shimura variety corresponding to the Shimura data $(\BG,X,K)$ and a morphism $\phi: \FQ\to \CG_\BG$, from the quasi-motivic Galois gerb $\FQ$, see \cite{Kis}, to the neutral Galois gerb $\CG_\BG$. In this situation the Rapoport-Zink space is equipped with symmetries by a reductive group $I_\phi$. Now, regarding the fact $(\textasteriskcentered\textasteriskcentered)$, one can produce the uniformization map

$$
\coprod_\phi I\phi(\BQ) \backslash \check{\CM} (G, [b_\phi], \{\mu\}) \times G(\BA_f^p)/K \to \scrS_K
$$

\noindent
Furthermore, the uniformization theorem states that the above morphism induces an isomorphism after passing to the completion along certain subvarieties $\CT_{\phi,K^p}\subseteq\scrS_K$. For the proof for the Shimura varieties of PEL-type see \cite{RZ}[Chapter~6], and for generalizations to Shimura varieties of Hode type and abelian type see \cite{Kim} and \cite{Shen}, respectively.

\begin{remark}\label{RemL-RI} Note that the uniformization theory provides a geometric meaning for the Langlands-Rapoport description of the $\ol\BF_p$-points of Shimura varieties, formulated in \cite{LR}. 

\end{remark}


\bigskip

\noindent
Let us now move to the analogous picture over function fields.\\

\subsection{The analogous picture over function fields}\label{SubsecTheAnalogousPic}
\noindent
Here the shimura data $(\BG,X,K)$, would be replace by a tuple $(\FG, \ul{\hat{Z}}, H)$, which is called \emph{$\nabla\scrH$-data}.

\begin{definition}\label{DefNablaH-data}
A  $\nabla\scrH$-data is a tuple $(\FG, \ul{\hat{Z}}, H)$ consisting of
\begin{enumerate}

\item[-] a smooth affine group scheme $\FG$ over a smooth projective curve $C$ over $\BF_q$,

\item[-] an $n$-tuple of (local) bounds $\ul{\hat{Z}}:=(\hat{Z}_{\nu_i})_{i=1\dots n}$, in the sense of Definition \ref{DefLBC}, at the fixed characteristic places $\nu_i\in C$, 

\item[-] a compact open subgroup $H\subseteq \FG(\BA_C^\ul\nu)$.

\end{enumerate}

\end{definition}

To such a data we associate a moduli stack
$\nabla_n^{H,\ul{\hat{Z}}}\scrH^1(C,\FG)^\ul\nu$ parametrizing global $\FG$-shtukas with level $H$-structure which are in addition bounded by $\ul{\hat{Z}}$. Let us explain this assignment a bit further. For this purpose, we first recall the definition of the moduli of $\FG$-shtukas (without imposing boundedness conditions).

\begin{definition}\label{Global Sht}

A global $\FG$-shtuka $\ul\CG$ over an $\BF_q$-scheme $S$ is a tuple $(\CG,\ul\charsect,\tauGlob)$ consisting of a $\FG$-bundle $\CG$ over $C_S$, an $n$-tuple of (characteristic) sections $\ul\charsect$, and an isomorphism $\tauGlob\colon  \s \CG|_{C_S\setminus \Gamma_{\ul\charsect}}\isoto \CG|_{C_S\setminus  \Gamma_{\ul\charsect}}$. We let $\nabla_n\scrH^1(C,\FG)$ denote the  stack whose $S$-points is the category of $\FG$-shtukas over $S$. Clearly there is a natural projection $\nabla_n\scrH^1(C,\FG)\to C^n$. Set $$\nabla_n\scrH^1(C,\FG)^\ul\nu:=\nabla_n\scrH^1(C,\FG)\times_{C^n}\Spf A_\ul\nu .$$  

\end{definition}

The resulting moduli stack $\nabla_n\scrH^1(C,\FG)^\ul\nu$ is an ind-algebraic stack. See \cite{AH_Global} for further explanation regarding the ind-algebraic structure. Furthermore, this moduli stack can be considered as a moduli for (C-)motives with $\FG$-structures. More precisely, for a connected test scheme $S$ in $\Nilp_{A_\ul\nu}$ there are \'etale 

$$
\omega^\ul\nu(-): \nabla\scrH^1(C,\CG)^\ul\nu(S)\to  Funct^\otimes (\Rep \FG ,\FM od_{\BA_C^\ul\nu[\pi_1(S,\ol s)]})
$$
\noindent
and crystalline
$$
\omega_{\nu_i}(-): \nabla\scrH^1(C,\FG)^\ul\nu(S)\to Loc-\BP_{\nu_i}-Sht(S) 
$$ 
\noindent
realization functors. The target of the first functor is  the category of tensor functors from the category of representations $\Rep \FG$ to the category $\FM od_{\BA_C^\ul\nu[\pi_1(S,\ol s)]}$ of modules over $\BA_C^\ul\nu[\pi_1(S,\ol s)]$, where $\pi_1(S,\ol s)$ denotes the algebraic fundamental group of $S$, with a geometric base point $\ol s\in S$. See \cite{AH_Global} or \cite{AH_LR} for the constructions and properties of these functors. Note that the set of morphisms of both categories $\nabla\scrH^1(C,\FG)^\ul\nu(S)$ and $Loc-\BP_{\nu_i}-Sht(S)$ can be enlarged to the set of quasi-isogenies.

Using tannakian formalism, we can equip the moduli stack $\nabla\scrH^1(C,\FG)^\ul\nu$ of global $\FG$-shtukas with $H$-level structure, for a compact open subgroup $H\subseteq \FG(\BA_Q^{\ul \nu})$; details are explained in \cite[Chapter~6]{AH_Global}. Here we only recall the definition.
  
\begin{definition}[$H$-level structure]\label{DefLevelStr}
Assume that $S\in \Nilp_{\wh A_\ul\nu}$ is connected and fix a geometric point $\ol s$ of $S$. Let $\pi_1(S,\bar{s})$ denote the algebraic fundamental group of $S$. For a global $\FG$-shtuka $\ul\CG$ over $S$ let us consider the set of isomorphisms of tensor functors $\Isom^{\otimes}(\omega^\ul\nu(\ul{\CG})(-),\omega^\circ(-))$, where $\omega^\circ\colon \Rep_{\BA_C^{\ul \nu}}\FG \to \FM od_{\BA_C^{\ul \nu}}$ denote the neutral fiber functor. The set $\Isom^{\otimes}(\omega^\ul\nu(\ul{\CG})(-),\omega^\circ(-))$ admits an action of $\FG(\BA_C^{\ul \nu})\times\pi_1(S,\bar{s})$ where $\FG(\BA_C^{\ul \nu})$ acts through $\omega^\circ(-)$ by tannakian formalism and $\pi_1(S,\bar{s})$ acts through $\omega^\ul\nu(\ul{\CG})(-)$. For a compact open subgroup $H\subseteq \FG(\BA_C^{\ul \nu})$ we define a \emph{rational $H$-level structure} $\bar\gamma$ on a global $\FG$-shtuka $\ul \CG$ over $S\in\Nilp_{\wh A_\ul\nu}$ to be a $\pi_1(S,\bar{s})$-invariant $H$-orbit $\bar\gamma=H\gamma$ in $\Isom^{\otimes}(\omega^\ul\nu(\ul{\CG})(-),\omega^\circ(-))$.

\end{definition}
\noindent
Now we define $\nabla_n^{H,\ul{\wh Z}}\scrH^1(C,\FG)^{\ul \nu}$:

\begin{definition}\label{DefBoundedNablaH}

Let $(\FG, \ul{\hat{Z}}, H)$ be a $\nabla\scrH$-data in the above sense. We denote by $$\nabla_n^{H,\ul{\wh Z}}\scrH^1(C,\FG)^{\ul \nu}$$ the category fibered in groupoids, whose category of $S$-valued points $\nabla_n^{H,\ul{\wh Z}}\scrH^1(C,\FG)^{\ul \nu}(S)$ parametrizes tuples $(\ul \CG,\bar\gamma)$, consisting of a global $\FG$-shtuka $\ul\CG$ together with a rational $H$-level structure $\bar\gamma$ such that the corresponding local $\BP_{\nu_i}$-shtuka $\omega_{\nu_i}(\ul\CG)$ is bounded by $\hat{Z}_{\nu_i}$ for every $1\leq i\leq n$. 

\end{definition}

\noindent
This finally establishes the assignment

$$
(\FG,\ul{\wh Z},H)\mapsto \nabla_n^{H,\ul{\wh Z}}\scrH^1(C,\FG)^{\ul \nu}.
$$

\noindent
The moduli stack $\nabla_n^{H,\ul{\wh Z}}\scrH^1(C,\FG)^{\ul \nu}$ is a formal algebraic stack according to \cite{AH_Global}, and lies over a completed fiber product of reflex rings $R_{\ul{\wh Z}}:=R_{\wh Z_{\nu_1}}\hat{\times}\dots\hat{\times} R_{\wh Z_{\nu_n}}$. Note that the above assignment is functorial with respect to the morphisms between two $\nabla\scrH$-data. The functoriality with respect to $H$, allows to equip the inverse limit $$\nabla_n^{\ast,\ul{\wh Z}}\scrH^1(C,\FG)^{\ul \nu}:=\invlim[H]\nabla_n^{H,\ul{\wh Z}}\scrH^1(C,\FG)^{\ul \nu},$$ and hence it's cohomology, with Hecke operation; see \cite{AH_LR}[Definition~3.4]. \\

\noindent
The category of $\GL_n$-shtukas $\nabla\scrH^1(C,\GL_n)(\ol\BF_q)$ with quasi-isogenies as the set of morphisms is equivalent with the category of $C$-motives $\mot(\ol\BF_q)$. The latter category can  be defined over a general field $L/\BF_q$ and generalizes the category of $t$-motives, see \cite{Anderson} and \cite{Taelman}. Furthermore $\mot(\ol\BF_q)$ is a semi-simple tannakian tensor category, equipped with a fiber functor 
$$\omega(-):\mot(\ol\BF_q)\to \ol\BF_q.Q-Vector~Spaces,
$$
see \cite{AH_LR} and also \cite{A_CMot}. Note that this is despite the complicated situation over number fields, where the corresponding fact only relies on the Tate conjecture and Grothendieck’s standard conjectures. This allows one to define the corresponding motivic groupoid $\FP:=\mot(\ol\BF_q)(\omega)$ and identify  a morphism $\phi: \FP \to \CG_\FG$ with a $\FG$-shtuka $\ul\CG$ in $\nabla\scrH^1(C,\FG)(\ol\BF_q)$; see \cite{AH_LR}. \\

\noindent
\subsubsection{Rapoport-Zink spaces for local $\BP$-shtukas and their local models}\label{Rapoport-Zink spaces for local Shtukas}
In analgy with the Shimura variety side we define
\begin{definition}\label{DefLocalNablaHdata}
\emph{A local $\nabla\scrH$-data} is a tuple $(\BP,\hat{Z}, b)$ consisting of

\begin{enumerate}
\item[-]
A smooth affine group scheme over $\BP$ with connected reductive generic fiber $P$,
\item[-]
A local bound $\hat{Z}$ in the sense of Definition \ref{DefLBC}.
\item[-]
A $\sigma$-conjuagacy class of an element $b\in P(\ol\BF \dpl z \dpr)$. 
\end{enumerate}

\end{definition} 
 
To a tuple $(\BP,\hat{Z}, b)$ of local $\nabla\scrH$-data one may associate a formal scheme $\check{\CM}(\BP,\hat{Z}, b)$ which is a moduli space for local $\BP$-shtukas together with a quasi-isogeny to a fixed local $\BP$-shtuka $\ul\BL$, determined by the local $\nabla\scrH$-data. In analogy with number fields, they are called Rapoport-Zink spaces (for local $\BP$-shtukas). These moduli spaces were first introduced and studied in \cite{H-V} for the case where $\BP$ is a constant split reductive group over $\BD$, and then generalized to the case where $\BP$ is a smooth affine group scheme over $\BD$ with connected reductive generic fiber in \cite{AH_Local}. Let us briefly recall the construction of these formal schemes.

Let $\ul\BL$ be a local $\BP$-shtuka over $\BF$. The bound $\hat{Z}$ determines the reflex ring $R_{\hat{Z}}$. Consider the following functor

\begin{eqnarray*}
\ul{\check{\CM}}_{\ul\BL}:&(\Nilp_{\check{R}_{\hat{Z}}})^o &\longto  \Sets  \hspace{7cm}\vspace{-2mm}\\
&\SSS &\longmapsto  \big\{\text{Isomorphism classes of }(\ul\CL,\delta); \text{where: }\\
& &~~~~~~~~~~~~~~~~~~~~~~~~~~~~-\ul\CL~\text{is a local $\BP$-shtuka over $\SSS$ and}\\ 
& &  ~~~~~~~~~~~~~~~~~~~~~~~~~~~~ -\text{$\ol\delta\colon  \ul\CL_{\ol S} \to \ul\BL_\ol S$ is a quasi-isogeny}
\big\}. 
\end{eqnarray*}
\noindent
Here $\ol S$ is the closed subscheme of $S$ defined by $\zeta=0$. By rigidity property of quasi-isogenies Proposition \ref{PropRigidityLocal}, this functor is equivalent with the functor introduced in  Proposition \ref{PropFlRepUnBoundedRZ}, and therefore is representable by $\wh\CF\ell_{\BP, R_{\hat{Z}}}$. Now we define the following sub-functor

\noindent
\begin{definition}
 Let $\hat{Z}=([R, \hat{Z}_R])$ be a bound with reflex field $R_{\hat{Z}}$. 

\begin{enumerate} 
\item
We define \emph{the Rapoport-Zink space for (bounded) local $\BP$-shtukas}, as the space given by the following functor of points

\begin{eqnarray*}
\ul{\check{\CM}}_{\ul\BL}^{\hat{Z}}:&(\Nilp_{\check{R}_{\hat{Z}}})^o &\longto  \Sets  \hspace{7cm}\vspace{-2mm}\\
&\SSS &\longmapsto  \big\{\text{Isomorphism classes of }(\ul\CL,\delta)\colon\;\text{where: }\\
& &~~~~~~~~~~~~~~~~~~~~~~~~~~~~-\ul\CL~\text{is a local $\BP$-shtuka}\\
& & ~~~~~~~~~~~~~~~~~~~~~~~~~~~~~~~~\text{ over $\SSS$ bounded by $\hat{Z}$ and}\\ 
& &  ~~~~~~~~~~~~~~~~~~~~~~~~~~~~ -\text{$\ol\delta\colon  \ul\CL_{\ol S} \to \ul\BL_\ol S$ a quasi-isogeny}
\big\}. 
\end{eqnarray*}

\item
Let $Z$ denote the
special fiber of $\hat{Z}$ over $\kappa$. We define the associated affine Deligne-Lusztig variety as the reduced
closed ind-subscheme $X_{Z}(b) \subseteq \CF\ell_G$ whose $K$-valued points (for any field extension $K$ of $\BF$) are
given by
$$
X_{Z}(b)(K) :=\{ 
g\in \CF\ell_\BP(K): g^{-1}b \sigma^\ast(g) \in Z(K)\}
$$

\end{enumerate}
\end{definition}

The following theorem ensures the representability of the above functor by a formal scheme locally formally of finite type.

\begin{theorem}\label{ThmRZisLFF}
The functor $\ul{\check{\CM}}_{\ul\BL}^{\hat{Z}}$ is ind-representable
by a formal scheme over $\Spf \check{R}_{\hat{Z}}$ which is locally formally of finite type and separated. It is an ind-closed ind-subscheme of $\CF\ell_\BP \wh \times_{\BF_q} \Spf \check{R}_{\hat{Z}}$. Its underlying reduced subscheme equals $X_{Z}(b)$,
which is a scheme locally of finite type and separated over $\BF$, all of whose irreducible components
are projective.
The formal scheme representing it
is called a Rapoport-Zink space for bounded local $\BP$-shtukas.

\end{theorem}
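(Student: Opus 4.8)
The plan is to follow the strategy already established for the unbounded case in Proposition~\ref{PropFlRepUnBoundedRZ} and then cut down by the boundedness condition, importing the finiteness results from \cite{H-V} and \cite{AH_Local}. First I would observe that, via the rigidity of quasi-isogenies (Proposition~\ref{PropRigidityLocal}), the functor $\ul{\check{\CM}}_{\ul\BL}$ without boundedness condition is identified with the functor of Proposition~\ref{PropFlRepUnBoundedRZ}, hence is pro-represented by $\wh{\SpaceFl}_{\BP,\check R_{\hat Z}}$; this is stated in the excerpt just above the theorem. Under this identification a pair $(\ul\CL,\ol\delta)$ corresponds to a point of $\wh{\SpaceFl}_\BP$ locally given by an element $g\in LP(S')$ after an \'etale cover, and the local $\BP$-shtuka structure $\tauLoc$ is recovered from $b$ and the trivialization. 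The boundedness condition ``$\ul\CL$ is bounded by $\hat Z$'' is, by \DefBDLocal\ (Definition~\ref{DefLBC}\,\ref{DefBDLocal_D}), exactly the condition that this classifying morphism factors through the closed ind-subscheme $\wh Z_R\subset\wh{\SpaceFl}_{\BP,R}$ after every finite base change $R/\BaseOfD\dbl\zeta\dbr$ over which a representative exists. Since $\wh Z$ is an equivalence class of closed ind-subschemes and being a closed ind-subscheme is an fpqc-local and Galois-descent-stable notion (as noted in Definition~\ref{DefLBC}\,\ref{DefBDLocal_C}), the subfunctor $\ul{\check{\CM}}_{\ul\BL}^{\hat Z}\subseteq\ul{\check{\CM}}_{\ul\BL}$ is represented by the closed ind-subscheme $\wh Z\times_{\wh{\SpaceFl}_\BP}\ul{\check{\CM}}_{\ul\BL}$, i.e.\ by an ind-closed ind-subscheme of $\SpaceFl_\BP\whtimes_{\BF_q}\Spf\check R_{\hat Z}$. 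This gives ind-representability, separatedness (inherited from the ind-separatedness of $\SpaceFl_\BP$ recalled in Remark~\ref{RemFlagisquasiproj}), and the ind-closed embedding claim at once.

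Next I would address the underlying reduced subscheme. The reduced locus of $\ul{\check{\CM}}_{\ul\BL}^{\hat Z}$ lives over the special fiber, i.e.\ over $\kappa$ where $\zeta=0$; here a point is a local $\BP$-shtuka $\ul\CL$ over a reduced $\BF$-scheme together with a quasi-isogeny $\ol\delta$ to $\ul\BL_{\ol S}=(\ul\BL, b)$. Trivializing $\CL$ via $\ol\delta$ turns the data into an element $g\in\SpaceFl_\BP(K)$, and the requirement that the resulting $\tauLoc$ again define an $L^+\BP$-torsor (i.e.\ that $g^{-1}b\,\sigma^\ast(g)$ land in $L^+\BP$, cut by the bound) is precisely $g^{-1}b\,\sigma^\ast(g)\in Z(K)$ where $Z$ is the special fiber of $\hat Z$. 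Hence the reduced subscheme is the affine Deligne--Lusztig variety $X_Z(b)$ as defined in the excerpt. To see it is a scheme locally of finite type and separated over $\BF$ with projective irreducible components, I would invoke that $Z$ is a projective scheme over $\kappa_{R_{\hat Z}}$ (Definition~\ref{DefLBC}\,\ref{DefBDLocal_C}, using \cite[Remark~4.3]{AH_Local} and \cite[Lemma~5.4]{H-V}), that $\SpaceFl_\BP$ is ind-projective or at least ind-quasi-projective of ind-finite type (Remark~\ref{RemFlagisquasiproj}), and then cite the corresponding structural result for affine Deligne--Lusztig varieties in twisted affine flag varieties from \cite{H-V} (and its generalization in \cite{AH_Local}): the map $g\mapsto g^{-1}b\,\sigma^\ast(g)$ is, after restricting to a large enough affine Schubert-type piece, a morphism whose fibers are the desired quasi-compact pieces, and $X_Z(b)$ is a closed subscheme of a disjoint union of projective Schubert varieties, locally of finite type and separated; projectivity of irreducible components follows as in \cite{H-V}.

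Finally I would upgrade ``ind-representable by an ind-scheme with reduced part locally of finite type'' to ``representable by a formal scheme locally formally of finite type.'' The point is that $\ul{\check{\CM}}_{\ul\BL}^{\hat Z}$ is an ind-closed ind-subscheme of $\SpaceFl_\BP\whtimes_{\BF_q}\Spf\check R_{\hat Z}$ whose reduced subscheme $X_Z(b)$ is locally of finite type and separated; by the same bookkeeping as in the constant-group case (\cite[Theorem~4.18 / \S5]{H-V}, generalized in \cite{AH_Local}), each closed ind-subscheme in the defining inductive system is a scheme proper over a finite-type piece of $X_Z(b)$, the transition maps are closed immersions that are nilpotent thickenings in the reduced direction, and the $z$-adic (equivalently $\zeta$-adic together with the thickening from the ind-structure) topology exhibits the colimit as an adic formal scheme; separatedness and local formal finiteness are then immediate. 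I would simply cite \cite[Theorem~4.18]{H-V} for $\BP$ constant split reductive and \cite[Theorem~4.9 or Chapter~4]{AH_Local} for the general smooth affine $\BP$ with connected reductive generic fiber, since the theorem as stated is their result.

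The main obstacle is the last step: proving that the colimit of the bounded pieces is genuinely a \emph{formal} scheme locally formally of finite type, rather than merely an ind-scheme. This requires controlling the ind-structure transversally to the special fiber — showing the nilpotent thickenings defining successive terms are compatible with a finitely generated ideal of definition and that each term is proper (not just of finite type) over a quasi-compact piece of $X_Z(b)$. All of this is genuinely carried out in \cite{H-V} and \cite{AH_Local}; in this expository/survey context the honest move is to reduce to their statements rather than reprove them, so the ``proof'' is really a careful verification that our functor coincides with theirs after the identifications above.
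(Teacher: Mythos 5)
Your proposal is correct and ends up exactly where the paper does: the paper's entire proof of this theorem is the citation ``See \cite[Theorem~4.18]{AH_Local}'', and your final paragraph correctly identifies that reducing to that result (and to \cite{H-V} in the constant case) is the honest content of the proof. The intermediate steps you supply — rigidity identifying the unbounded functor with $\wh{\SpaceFl}_{\BP,\check R_{\hat Z}}$, cutting out the bounded locus as a closed ind-subscheme, and matching the reduced subscheme with $X_Z(b)$ — are a faithful reconstruction of what the cited theorem actually establishes, so there is nothing to correct.
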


\begin{proof}

See \cite[Theorem~4.18]{AH_Local}.

\end{proof}

The data $(\BP,\hat{Z}, b)$ determines the reflex ring $R_{\hat{Z}}$, see Definition \ref{DefLBC}, and a local $\BP$-shtuka $\ul\BL:=(L^+\BP,b\hat{\sigma})$, and thus we may establish the assignment

$$
(\BP,\hat{Z}, b)\mapsto \check{\CM}(\BP,\hat{Z}, b):=\ul{\check{\CM}}_{\ul\BL}^{\hat{Z}}.
$$

\noindent
Bellow, in analogy with Shimura variety side, we discuss the local model theory and uniformization theory for $\FG$-shtukas. Note here that these theories again rely on the analogs of the crucial facts (\textasteriskcentered) and (\textasteriskcentered\textasteriskcentered), where have been established in \cite[Proposition 5.7 and Theorem 5.10]{AH_Local}. In particular, in analogy with the Serre-Tate theorem, one can prove that for a nilpotent thickening $\ol S\to S$, and a global $\FG$-shtuka $\ol{\ul\CG}$ over $\ol S$, the following natural morphism

$$
Defo_{S/\ol S}(\ol{\ul\CG})\to\prod_{\nu_i} Defo_{S/\ol S} (\omega_{\nu_i}(\ol{\ul\CG})) 
$$
between deformation spaces is an isomorphism.
This suggests that one can study the local properties of Rapoport-Zink spaces for local $\BP$-shtukas by constructing so called local model diagram. 

\noindent
We let $\wt{\CM}_{\ul\BL}^{\hat{Z}}$ be the space associated to the following functor of points

\begin{eqnarray}\label{EqRecBd}
\wt{\CM}_{\ul\BL}^{\hat{Z}}:&(\Nilp_{\check{R}_{\hat{Z}}})^o &\longto \Sets  \hspace{7cm}\vspace{-2mm}\nonumber\\
&\SSS &\longmapsto \big\{ (\ul\CL:=(\CL_+,\tau),\delta, \gamma); \text{consisting of}\\ & & ~~~~~~~~~~~~~~~~~~~~~~~~~~~~-(\ul\CL,\delta) \in \check{\CM}_{\ul\BL}^{\hat{Z}} \text{and}\nonumber\\
& & ~~~~~~~~~~~~~~~~~~~~~~~~~~~~-\text{ a trivialization}~\gamma:\hat{\sigma}^\ast\CL_+\tilde{\to}L^+\BP
\big\}\nonumber
\end{eqnarray}
\noindent
Bellow we state the local model theorem for Rapoport-Zink spaces for $\BP$-shtukas. We postpone the proof to \ref{SubsectProof}.
\begin{theorem}\label{ThmRapoportZinkLocalModel}
Let $(\BP,\hat{Z},b)$ be a local $\nabla\scrH$-data and set $\check{\CM}_{\ul\BL}^{\hat{Z}}:=\check{\CM}(\BP,\hat{Z},b)$. There is a roof of morphism
\begin{equation}\label{nablaHRoof} 
\xygraph{
!{<0cm,0cm>;<1cm,0cm>:<0cm,1cm>::}
!{(0,0) }*+{\wt{\CM}_{\ul\BL}^{\hat{Z}}}="a"
!{(-1.5,-1.5) }*+{\check{\CM}_{\ul\BL}^{\hat{Z}}}="b"
!{(1.5,-1.5) }*+{\wh{Z},}="c"
"a":_{\pi}"b" "a":^{\pi^{loc}}"c"
}  
\end{equation}

satisfying the following properties

\begin{enumerate}
\item
the morphism $\pi^{loc}$ is formally smooth and
\item
the $L^+\BP$-torsor $\pi: \wt{\CM}_{\ul\BL}^{\hat{Z}}\to \check{\CM}_{\ul\BL}^{\hat{Z}}$ admits a section $s'$ locally for the \'etale topology on $\check{\CM}_{\ul\BL}^{\hat{Z}}$ such that $\pi^{loc}\circ s'$ is formally \'etale.
\end{enumerate}

\end{theorem}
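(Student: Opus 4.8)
The plan is to construct $\wt{\CM}_{\ul\BL}^{\hat Z}$ as the moduli space parametrizing tuples $(\ul\CL,\delta,\gamma)$ as in \eqref{EqRecBd} and to realize it simultaneously as an $L^+\BP$-torsor over $\check{\CM}_{\ul\BL}^{\hat Z}$ and, via the ``Frobenius-linearization'' trick, as living over the bound $\wh Z$. First I would define $\pi$ by simply forgetting the trivialization $\gamma$; since the datum $\gamma\colon\hat\sigma^\ast\CL_+\isoto L^+\BP$ is a trivialization of an $L^+\BP$-torsor (note $\hat\sigma^\ast\CL_+$ is again an $L^+\BP$-torsor because $L^+\BP$ is defined over $\BF_q$), the fiber of $\pi$ over a point $(\ul\CL,\delta)$ is a pseudo-torsor under $L^+\BP(S)=\BP(\BD_S)$, and it is a torsor because $L^+\BP$-torsors are étale-locally (indeed fppf-locally) trivial; this also gives the desired étale-local section $s'$ in part (ii). The key point is that after pulling back along the torsor $\pi$, the loop torsor $\CL$ carries the trivialization $\delta$ over $\ol S$ and, via $\gamma$ and $\delta$, the isomorphism $\tauLoc^{-1}\colon\CL\to\hat\sigma^\ast\CL$ gets transported into a morphism landing in $LP$, whose boundedness by $\hat Z$ means precisely that it factors through $\wh Z_R$ (after the relevant base change to $R$); this defines $\pi^{loc}$.

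The heart of the argument is the formal smoothness of $\pi^{loc}$, and this is where I would invoke the rigidity of quasi-isogenies (Proposition \ref{PropRigidityLocal}) together with the deformation-theoretic input recalled just before the theorem (the analog of Serre--Tate, from \cite[Proposition 5.7 and Theorem 5.10]{AH_Local}). Concretely, given a closed immersion $\ol S\into S$ in $\Nilp_{\check R_{\hat Z}}$ defined by a nilpotent sheaf of ideals and a point of $\wt{\CM}_{\ul\BL}^{\hat Z}$ over $\ol S$ together with a lift of its image in $\wh Z$ to $S$, I must produce a lift of the whole tuple. Rigidity says the quasi-isogeny $\ol\delta$ extends uniquely to $S$ once the local $\BP$-shtuka is lifted; and lifting the local $\BP$-shtuka $\ul\CL=(\CL_+,\tauLoc)$ compatibly with the given lift over $\wh Z$ amounts to lifting the $L^+\BP$-torsor $\CL_+$ together with $\tauLoc$, which, after using $\gamma$ to trivialize $\hat\sigma^\ast\CL_+$, is governed exactly by the point of the (unbounded, then bounded) affine flag variety — that is, by the target $\wh Z$. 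So a lift over $\wh Z$ canonically produces a lift of $(\ul\CL,\tauLoc)$, hence of $\delta$ by rigidity, hence of $\gamma$ (which is pinned down once $\CL_+$ is lifted, up to the torsor structure that is already accounted for). This shows $\pi^{loc}$ is formally smooth; moreover on the étale-local section $s'$ the trivialization $\gamma$ is rigidified, so the remaining ambiguity disappears and $\pi^{loc}\circ s'$ becomes formally étale, giving (i) and (ii).

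For the statement that $\wt{\CM}_{\ul\BL}^{\hat Z}$ is an honest formal scheme (and $\pi$ a genuine $L^+\BP$-torsor of the asserted type), I would note that $\check{\CM}_{\ul\BL}^{\hat Z}$ is a formal scheme locally formally of finite type by Theorem \ref{ThmRZisLFF}, that $L^+\BP$ is representable by an affine group scheme over $\BF$, and that torsors under such a group over a formal scheme are again formal schemes; the étale-local triviality of $L^+\BP$-torsors — equivalently, the existence of étale-local sections of $LP\to\SpaceFl_\BP$ used in the proof of Proposition \ref{PropFlRepUnBoundedRZ} via \cite[Theorem~1.4]{PR2} — yields the section $s'$.

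\textbf{Main obstacle.} The step I expect to require the most care is the precise bookkeeping in the formal-smoothness argument: one must check that the canonical lift of $(\CL_+,\tauLoc)$ extracted from a lift over $\wh Z$ is genuinely compatible with the boundedness condition (i.e. that the bound $\hat Z$ is transported correctly under $\gamma$ and $\delta$, using that $\hat Z$ is stable under the left $L^+\BP$-action as in Definition \ref{DefLBC}), and that lifting $\gamma$ contributes no obstruction beyond the $L^+\BP$-torsor structure already quotiented out by $\pi$. Everything else — representability, the torsor property, the existence of étale-local sections — is formal once Theorem \ref{ThmRZisLFF}, Proposition \ref{PropRigidityLocal}, and the deformation theory of local $\BP$-shtukas are granted.
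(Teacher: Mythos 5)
Your proposal follows essentially the same route as the paper: $\pi$ forgets $\gamma$; $\pi^{loc}$ sends $(\ul\CL,\delta,\gamma)$ to $(\CL_+,\gamma\circ\tau^{-1})$ (note that $\delta$ plays no role in this map); formal smoothness is obtained by lifting $\gamma$ via smoothness of $L^+\BP$ and then recovering $\tau_\CL=g^{-1}\circ\gamma$ and $\delta$ from the given lift in $\hat{Z}$, using that $\hat\sigma_S$ factors through the nilpotent thickening (the same mechanism underlying the rigidity you invoke); and $s'$ comes from an \'etale trivialization $\gamma'$ of the universal torsor. The one detail worth making explicit is that the section must be defined using $\hat\sigma^\ast\gamma'$, so that for any lift one has $\hat\sigma^\ast\gamma'=j^\ast\gamma_0'$ independently of the choice of lift of $\gamma'$ --- this is precisely what upgrades $\pi^{loc}\circ s'$ from formally smooth to formally \'etale, and then the uniqueness of $\gamma'$ itself follows from the \'etaleness of $\CM'\to\check{\CM}_{\ul\BL}^{\hat{Z}}$.
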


\noindent
\subsubsection{Points of moduli of $\FG$-shtukas mod $\nu$ and uniformization theory}\label{SubsectPointsOfNablaH} 
Let $(\FG,\ul{\wh Z},H)$ be a $\nabla\scrH$-data. We consider the category of $C$-motives $\mot(\ol\BF_q)$. It is a semi-simple tannakian category with fiber functor $\omega: \mot(\ol\BF_q)\to \ol\BF_q(C)$-vector spaces. Furthermore it is equipped with crystalline realization $\omega_{\nu_i}$ (at the characteristic place $\nu_i$) and \'etale realization functor $\omega^{\ul\nu}$ (away from characteristic places $\nu_i$). For a detailed account on the construction of the category $\mot(\ol\BF_q)$, its properties and realization functors, see \cite{A_CMot}. Let $\FP$ denote the corresponding groupoid $\mot(\ol\BF_q)(\omega)$. Recall that giving an admissible morphism $\phi:\FP\to \CG_G$ is equivalent with fixing a global $\FG$-shtuka $\ul\CG$ over an algebraically closed field $\BaseFldInSectUnif$. So let us fix a global $\FG$-shtuka  $(\ul{\CG}_0,\bar\gamma_0)$  with characteristic $\ul\nu$, bounded by $\ul{\hat{Z}}$, with $H$-level structure $\bar\gamma_0=H\gamma_0\in H\backslash\Isom^{\otimes}(\omega^\ul\nu({\ul{\CG}_0}),\omega^\circ)$. It defines a point in $\nabla_n^{H,\ul{\hat{Z}}}\scrH^1(C,\FG)^{\ul\nu}(\BaseFldInSectUnif)$. We fix a representative $\gamma_0\in\Isom^{\otimes}(\omega^\ul\nu(\ul{\CG}_0),\omega^\circ)$ of $\bar\gamma_0$. Let $I_{\ul\CG_0}(Q)$ (equivalently $I_{\phi_0}(Q)$) be the group $\QIsog_\BaseFldInSectUnif(\ul\CG_0)$ of quasi-isogenies of $\ul\CG_0$. Let $(\ul{\BL}_i)_{i=1\ldots n}:=(\omega_{\nu_i}(\ul\CG_0))_i$ denote the associated tuple of local $\BP_{\nu_i}$-shtukas over $\BaseFldInSectUnif$. Let us fix a trivialization $\ul\BL_i=(L^+\BP, b_i)$. Let $\check{\CM}(\BP_{\nu_i},\wh{Z}_{\nu_i},b_i)$ denote the corresponding Rapoport-Zink space. By Theorem~\ref{ThmRZisLFF} the product 
$$
\prod_i \check{\CM}(\BP_{\nu_i},\wh{Z}_{\nu_i},b_i):=\check{\CM}(\BP_{\nu_1},\wh{Z}_{\nu_1},b_1)\wh\times_\BaseFldInSectUnif\ldots\wh \times_\BaseFldInSectUnif\check{\CM}(\BP_{\nu_n},\wh{Z}_{\nu_n},b_n)
$$ 
is a formal scheme locally formally of finite type over $R_{\ul{\wh Z}}:=R_{\wh Z_{\nu_1}}\wh\times\dots \wh\times R_{\wh Z_{\nu_n}}$. Note that the group $J_{\ul{\BL}_i}(Q_{\nu_i})=\QIsog_\BaseFldInSectUnif(\ul{\BL}_i)$ of quasi-isogenies of $\ul\BL_i$ over $\BaseFldInSectUnif$ acts naturally on $\check{\CM}(\BP_{\nu_i},\wh{Z}_{\nu_i},b_i)$. In particular we see that the group $I_{\ul\CG_0}(Q)$ acts on $\prod_i \check{\CM}(\BP_{\nu_i},\wh{Z}_{\nu_i},b_i)$ via the natural morphism 
\begin{equation}\label{EqI(Q)}
I_{\ul\CG_0}(Q)\longto \prod_i J_{\ul{\BL}_i}(Q_{\nu_i}),\es \alpha\mapsto \bigl(\omega_{\nu_i}(\alpha)\bigr)_i.
\end{equation}


The uniformization theorem \cite[Theorem~7.4]{AH_Global} for the moduli of $\FG$-shtukas, states the following

\begin{enumerate}
\item \label{Uniformization2_A} 
There is an $I_{\ul\CG_0}(Q)$-invariant morphism
$$
\Theta'\colon  \prod_i \check{\CM}(\BP_{\nu_i},\wh{Z}_{\nu_i},b_i)\times \FG(\BA_Q^{\ul\nu}) {\slash}H\longto \nabla_n^{H,\ul{\hat{Z}}}\scrH^1(C,\FG)^{\ul\nu}\whtimes_{\BF_{\ul\nu}}\Spec\BaseFldInSectUnif.
$$
Furthermore, this morphism factors through a morphism 
$$
\Theta\colon  I_{\ul\CG_0}(Q) \big{\backslash}\prod_i \check{\CM}(\BP_{\nu_i},\wh{Z}_{\nu_i},b_i)\times \FG(\BA_Q^{\ul\nu}) {\slash}H\longto \nabla_n^{H,\ul{\hat{Z}}}\scrH^1(C,\FG)^{\ul\nu}\whtimes_{\BF_{\ul\nu}}\Spec\BaseFldInSectUnif
$$
of ind-algebraic stacks over $R_{\ul{\wh{Z}}}$ which is ind-proper and formally \'etale. Note that both morphisms are compatible with the action of $\FG(\BA_Q^{\ul\nu})$ which acts through Hecke-corres\-ponden\-ces on source and target, see  \cite[Remark~7.5]{AH_Global}.

\item \label{Uniformization2_B}
Let $\{T_j\}$ be a set of representatives of $I_{\ul\CG_0}(Q)$-orbits of the irreducible components of 
$$
\prod_i \check{\CM}(\BP_{\nu_i},\wh{Z}_{\nu_i},b_i) \times \FG(\BA_Q^{\ul\nu}) {\slash}H.
$$ 
Then the image $\Theta'(T_j)$ of $T_j$ under $\Theta'$ is closed and each $\Theta'(T_j)$ intersects only finitely many others. Let $\CZ$ denote the union of the $\Theta'(T_j)$ and let $\nabla_n^{H,\ul{\hat{Z}}}\scrH^1(C,\FG)^{\ul\nu}_{/\CZ}$ be the formal completion of $\nabla_n^{H,\ul{\hat{Z}}}\scrH^1(C,\FG)^{\ul\nu}\whtimes_{\BF_{\ul\nu}}\Spec\BaseFldInSectUnif$ along $\CZ$. Then $\Theta$ induces an isomorphism of formal algebraic stacks over $R_{\ul{\wh Z}}$
$$
\Theta_{\CZ}\colon  I_{\ul\CG_0}(Q) \big{\backslash}\prod_i \check{\CM}(\BP_{\nu_i},\wh{Z}_{\nu_i},b_i)\times \FG(\BA_Q^{\ul\nu}){\slash}H\;\isoto\; \nabla_n^{H,\ul{\hat{Z}}}\scrH^1(C,\FG)^{\ul\nu}_{/\CZ}.
$$
\end{enumerate}

\begin{remark}
One can formulate a function field analog of Langlands-Rapoport conjecture, which describes the points of the special fiber of $\nabla_n^{H,\ul{\hat{Z}}}\scrH^1(C,\FG)^{\ul\nu}_{/\CZ}$ in terms of group theoretic data; see \cite{AH_LR}. See also Remark \ref{RemL-RI}. 
\end{remark}

We summarize the above discussion in the following table. We then prove   Theorem~\ref{ThmRapoportZinkLocalModel} in the next subsection.\\

\begin{tabular}{ |p{8cm}|p{8cm}|p{3cm}|p{3cm}|  }
 \hline
 Number Fields & Function Fields\\
 \hline
 \hline
 The group $\BG$ over $\BQ$ & The group $\FG$ over $C$\\
 \hline
 characteristic p & characteristic $\ul\nu=\{\nu_i\}$\\
 \hline
 $G_p:=\BG\times_\BQ \BQ_p$ & $\BP_{\nu_i}$\\
\hline
 $\BS\to \BG_\BR$& n-tuple of boundedness conditions $\ul{\hat{Z}}$ \\
\hline
A compact open subgroup $K\subseteq \BG(\BA_\BQ)$& A compact open subgroup $H\subseteq \BG(\BA_C)$\\
\hline
 Shimura data $(\BG,X,K)$   & $\nabla\scrH$-data $(\FG,\ul{\hat{Z}},H)$ \\ 
 \hline
reflex ring $\CO_E$ of the reflex field $E=E(G,X,K)$ & reflex ring $R_{\ul{\hat{Z}}}$\\
\hline
The canonical integral model $\scrS_K$ & Moduli stack $\nabla_n^{H,\ul{\hat{Z}}}\scrH^1(C,\FG)^\ul\nu$ \\
\hline
Local Shimura data $(\CP,\{\mu\},[b])$    & Local $\nabla\scrH$-data $(\BP, \hat{Z}, [b])$  \\
\hline
 p-divisible groups and (iso-)crystals (with additional structure) & Local ($\BP$-)Shtukas \\
\hline
Rapoport-Zink space $\check{\CM}(\CP,\{\mu\},[b])$ over the reflex ring $\CO_{E_\mu}$& Rapoport-Zink space $\check{\CM}(\BP, \hat{Z}, [b])$ over the reflex ring $R_{\hat{Z}}$\\
\hline
The local model $\LM$ & The scheme $\hat{Z}$\\
\hline
The local Model diagram 
\xygraph{
!{<0cm,0cm>;<1cm,0cm>:<0cm,1cm>::}
!{(0,0) }*+{\wt{\CM}(G,\{\mu\},[b])}="a"
!{(-1.5,-1.5) }*+{\check{\CM}(G,\{\mu\},[b])}="b"
!{(1.5,-1.5) }*+{\textbf{M}^{loc},}="c"
"a":_{\pi}"b" "a":^{\pi^{loc}}"c"
}  
& The local Model diagram
\xygraph{
!{<0cm,0cm>;<1cm,0cm>:<0cm,1cm>::}
!{(0,0) }*+{\wt{\CM}(\BP,\hat{Z},[b])}="a"
!{(-1.5,-1.5) }*+{\check{\CM}(\BP,\hat{Z},[b])}="b"
!{(1.5,-1.5) }*+{\wh{Z},}="c"
"a":_{\pi}"b" "a":^{\pi^{loc}}"c"
}  
\\
\hline 
 The category of motives $Mot(\ol\BF_q)$ with realization functors $\omega_\ell(-)$ and $\omega_p(-)$ & The category of $C$-motives $\mot(\ol\BF_q)$ with realization functors $\omega^\ul\nu(-)$ and $\omega_{\nu_i}(-)$\\
\hline
fiber functor $\omega(-): Mot({\BF}_q)\to \ol\BQ$-vect. sp. (conjectural) & The fiber functor $\omega:\mot(\ol{\BF}_q)\to\ol Q$-vect. sp. \\
 \hline
quasi-motivic galois gerb $\FQ$ &  The motivic groupoid $\FP:=\mot(\BF)(\omega)$ \\
 \hline
The uniformization map $$
\coprod_\phi I\phi(\BQ) \backslash \check{\CM} (G, [b_\phi], \{\mu\}) \times G(\BA_f^p)/K 
$$
$$
\Theta\downarrow
$$
$$
\scrS_K
$$
& The uniformization map $$
\coprod_\phi I_\phi(Q) \big{\backslash}\prod_i \check{\CM}(\BP_{\nu_i},\wh{Z}_{\nu_i},b_i)\times \FG(\BA_Q^{\ul\nu}) {\slash}H
$$
$$
\Theta\downarrow
$$
$$
 \nabla_n^{H,\ul{\hat{Z}}}\scrH^1(C,\FG)^{\ul\nu}\whtimes_{\BF_{\ul\nu}}\Spec\BaseFldInSectUnif
$$\\
\hline
 \multicolumn{2}{|c|}{The analogy between Shimura varieties and moduli of G-Shtukas} \\
 \hline
 
\end{tabular}

\newpage
\subsubsection{Proof of the local model theorem for Rapoport-Zink spaces for local $\BP$-shtukas}\label{SubsectProof}

\begin{proof} of Theorem~\ref{ThmRapoportZinkLocalModel}. Sending the tuple $(\ul\CL:=(\CL_+, \tau_{\CL}),\delta,\gamma)$ to $(\CL_+, \gamma\circ \tau_{\CL}^{-1})$ defines a map ${\wt{\CM}_{\ul\BL}^{\hat{Z}}} \to \wh{\CF\ell}_{G,\check{R}_{\hat{Z}}}$. As the local $\BP$-shtuka $\ul\CL$ is bounded by $\hat{Z}$, this morphism factors through $\hat{Z}$; see Definition \ref{EqRecBd}. This defines the map $\pi^{loc}:{\wt{\CM}_{\ul\BL}^{\hat{Z}}} \to \hat{Z}$.\\
Take a closed immersion $i: S_0\to S$ defined by a nilpotent sheaf of ideals $I$. Since $I$ is nilpotent, there is a morphism $j:S\to S_0$ such that the $q$-Frobenius $\sigma_S$ factors as follows

\[
\xymatrix {
S\ar[r]^j \ar@/^2pc/[rr]^{\sigma_S} & S_0 \ar[r]^i& S.
}
\]
\noindent
Let $(\CL_{0+}, \tau_{\CL_0},\delta_0,\gamma_0)$ be a point in $\wt{\CM}_{\ul\BL}^{\hat{Z}}(S_0)$ and assume that it maps to $(\CL_{0+},g_0)$ under $\pi^{loc}$. Furthermore assume that $(\CL_+,g:\CL\tilde{\to} (LP)_S)$  lifts $(\CL_{0+},g_0)$ over $S$, i.e. $i^\ast \CL=\CL_0$ and $i^\ast g= g_0= \gamma_0\circ \tau_{\CL_0}^{-1}$. \\
\noindent
Consider the following diagram

\[
\xymatrix {
S_0\ar[rr]^{(\CL_{0+},\tau_{\CL_0},\delta_0,\gamma_0)}\ar[d]_i & & \wt{\CM}_{\ul\BL}^{\hat{Z}}\ar[d]^{\pi^{loc}}  \\
S\ar[rr]_{(\CL_+,g)}\ar@{.>}[urr]^{\alpha}& & \check{\CM}_{\ul\BL}^{\hat{Z}} \;.
}
\]

To prove $a)$ we have to verify that there is a map $\alpha$ that fits in the above  commutative diagram.

\noindent
We construct $\alpha : S\to\wt{\CM}_{\ul\BL}^{\hat{Z}}$ in the following way. First we take a lift $\gamma:\sigma_S^\ast \CL_+\tilde{\to} (L^+\BP)_S$ of $\gamma_0:\sigma_{S_0}^\ast \CL_{0+}\tilde{\to} (L^+\BP)_{S_0} $. To see the existence of such lift one can proceed as in  \cite[Proposition~2.2.c)]{H-V}. Namely, regarding the smoothness of $\BP$, one first observes that if a torsor gets mapped to the trivial torsor under $\check{H}^1(S_{\text{\'et}},L^+\BP)\to\check{H}^1(S_{0, \text{\'et}},L^+\BP)$, it must initially be a trivial one. Consider an $L^+\BP$-torsor $\CL_+$ over $S$. It can be represented by trivializing cover $S'\to S$ and an element $h''\in L^+\BP(S'')$, where $S''=S'\times_S S'$. A given trivialization $\gamma_0$ of $\CL_+$ over $S_0$ is given by $g_0' \in L^+\BP (S_0')$ with $p_2^\ast(g_0')p_1^\ast(g_0')^{-1}=h_0''$, where $h_0''$ is the image of $h''$ under $L^+\BP(S'')\to L^+\BP(S_0'')$ and $p_i: S''\to S'$ denotes the projection to the $i$'th factor, $i=1,2$. Take a trivialization $\beta$ of $\CL_+$, given by $f'\in L^+\BP(S')$ with $p_2^\ast(f')p_1^\ast(f')^{-1}=h''$. We modify it in the following way. Let $f_0'$ be the restriction of $f'$ to $S_0$. We have $p_2^\ast(f_0'^{-1}g_0')=p_1^\ast(f_0'^{-1}g_0')$ and therefore $f_0'^{-1}g_0'$ induces $t_0 \in L^+\BP(S_0)$. By smoothness of $\BP$ this section lifts to $t\in L^+\BP(S)$. The element $g':= t \cdot f'$ lifts $g_0'$ and satisfies $p_2^\ast(g')p_1^\ast(g')^{-1}=h''$, thus induces the desired trivialization $\gamma$. This ensures the existence of the lift $\gamma$. \\
The morphism $\alpha$ is given by the following tuple
$$
(\CL_+,\tau_\CL,\delta,\gamma):=(\CL_+,g^{-1}\circ \gamma,\tau_{\BL}\circ j^\ast \delta_0\circ\gamma^{-1}\circ g, \gamma).
$$

\noindent
Notice that

$$
\begin{CD}
i^\ast(\CL_+,\tau_\CL,\delta,\gamma)= i^\ast (\CL_+,g^{-1}\circ \gamma,\tau_{\BL}\circ j^\ast \delta_0\circ\gamma^{-1}\circ g, \gamma)\\
=(\CL_{0+},i^\ast g^{-1}\circ\gamma_0, \tau_{\BL}\circ\sigma^\ast\delta_0\circ\tau_{\CL_0}^{-1},\gamma_0)\\
=(\CL_{0+},\tau_{\CL_0},\delta_0,\gamma_0)
\end{CD}
$$
\noindent
and that $\pi^{loc}(\CL_+,\tau_\CL,\delta,\gamma)=(\CL,g)$.

\noindent
Now we prove part b). We take an \'etale covering $\CM'\to \check{\CM}_{\ul\BL}^{\hat{Z}}$ such that the universal $L^+\BP$-torsor  $\CL_+^{univ}$ admits a trivialization $\gamma': \CL_{+,\CM'}^{univ}\tilde{\to}(L^+\BP)_{\CM'}$. This yields the section

\[
\xymatrix {
& & \wt{\CM}_{\ul\BL}^{\hat{Z}} \ar[dl]_{\pi}\ar[dr]^{\pi^{loc}} &  \\
\CM'\ar[r]\ar@/^2pc/[urr]^{s'}&\check{\CM}_{\ul\BL}^{\hat{Z}}& &\hat{Z}  \;.
}
\]
\noindent
corresponding to the tuple $(\CL_+^{univ},\delta,\sigma^\ast \gamma')$.
\noindent
Consider the following diagram

\[
\xymatrix {
S_0\ar[rr]^{(\CL_{0+},\tau_{\CL_0},\delta_0,\gamma_0')}\ar[d]_i & & \CM'\ar[d]^{\pi^{loc}\circ s'}  \\
S\ar[rr]_{(\CL_+,g)}\ar@{.>}[urr]^{\alpha'}& &\hat{Z} \;.
}
\]
\noindent
We want to find $(\CL_+,\tau_{\CL},\delta,\gamma')$ with $g=\sigma^\ast \gamma'\tau_\CL^{-1}.$
First we construct
$$
(\CL_+,\tau_\CL,\delta,\gamma)\in \wt{\CM}_{\ul\BL}^{\hat{Z}}(S).
$$
Since $\sigma^\ast\gamma'=j^\ast i^\ast\gamma'=j^\ast\gamma_0'$, we take $\gamma:=j^\ast\gamma_0'$. This gives the morphism $\delta$ according to the following commutative diagram

\[
\xymatrix {
\BL & & \CL\ar[ll] \ar[rd]^{g}  \\
\sigma^\ast\BL \ar[u]_{\tau_\BL}& &\sigma^\ast\CL\ar[u]\ar[ll]^{j^\ast \delta_0} \ar[r]_{j^\ast \gamma_0'}^{\sim} & (LP)_S \;.
}
\]

\noindent
and furthermore determines $\tau_\CL$, we set 
$$
y:=\left(\CL_+,g^{-1}j^\ast\gamma_0',\tau_{\BL}\circ j^\ast \delta_0 \circ j^\ast\gamma_0'^{-1}\circ g,j^\ast \gamma_0'\right)\in 
\wt{\CM}_{\ul\BL}^{\hat{Z}}(S)
$$
with $\pi^{loc}(y)=(\CL_+, g)\in \hat{Z}(S)$. The section $s'$ sends $(\CL_{0+},\tau_{\CL_0},\delta_0,\gamma_0')$ to 
$$
(\CL_{0+},\tau_{\CL_0},\delta_0,\gamma_0=i^\ast j^\ast \gamma_0'=\sigma_{S_0}^\ast \gamma_0')=i^\ast y\in\wt{\CM}_{\ul\BL}^{\hat{Z}}(S_0).
$$
\noindent
Consider the point $$\pi(y)=(\CL_+,\tau_\CL,\delta)\in \check{\CM}_{\ul\BL}^{\hat{Z}}(S)$$ with $i^\ast \pi(y)= (\CL_{0+},\tau_{\CL_0},\delta_0)$. Then, since $\CM'\to \check{\CM}_{\ul\BL}^{\hat{Z}}$ is \'etale, there is a unique $\gamma':\CL\tilde{\to}(L^+\BP)_S$ with $i^\ast\gamma'=\gamma_0'$. Note that $$\gamma:=\sigma^\ast \gamma'=j^\ast i^\ast \gamma'=j^\ast \gamma'.$$
\noindent
This ensures the existence of $\alpha'$ which is given by $(\CL_+,\tau_\CL,\delta,\gamma')$.
To see the uniqueness let $(\CL_+,\tau_\CL,\delta,\gamma')\in\CM'(S)$ with $i^\ast(\CL_+,\tau_\CL,\delta,\gamma')=(\CL_0,\tau_{\CL_{0+}},\delta_0,\gamma'_0)$ and

$$
\pi^{loc}(\CL_+,\tau_\CL,\delta,\gamma'):=(\CL_+,\sigma^\ast \gamma' \tau_{\CL}^{-1})=(\CL,g).
$$

\noindent
Therefore $\CL_+$, $\tau_{\CL}=g^{-1} \sigma^\ast \gamma' = g^{-1} j^\ast \gamma_0'$ and $\delta=\tau_\BL \circ j^\ast\delta_0\circ j^\ast\gamma_0'^{-1}\circ g$ are uniquely determined and provide a point in $\check{\CM}_{\ul\BL}^{\hat{Z}}(S)$ and then $\gamma'$ is also uniquely determined.
\end{proof}
\section{ُSome applications of the local model}\label{SectApplocations}

Bellow we discuss some applications of the local model theorem \ref{ThmRapoportZinkLocalModel}.

\subsection{Local properties of R-Z spaces}\label{SubSectLocal properties of R-Z spaces}

\forget{
\noindent
We say that a group $\BP$ is Cohen-Macaulay (resp. normal) if all singularities occurring in the orbit closures of the orbits under the $L^+\BP$ action on $\CF\ell_\BP$ are Cohen-Macaulay (resp. normal). Note in particular that this is the case when $\BP$ is parahoric with tame $P$, see \cite[Theorem~8.4]{PR2}.

\begin{corollary}\label{PropLocModel}
We have the following statements:\\
\begin{enumerate}

\item[a)]
The Rapoport-Zink space $\check{\CM}_{\ul\BL}^{\hat{Z}}$ is flat over its reflex ring $R_{\hat{Z}}$ iff $\zeta$ is not a zero divisor in $\CO_{\hat{Z}}$.
\item[b)]
The Rapoport-Zink space $\check{\CM}_{\ul\BL}^{\hat{Z}}$ is Cohen-Macaulay (resp. normal) iff $\hat{Z}$ is Cohen-Macaulay (resp. normal). In particular when $\BP$ is Cohen-Macaulay\forget{ (resp. normal)} and $\zeta$ is not a zero divisor in $\CO_{\hat{Z}}$ then $\check{\CM}_{\ul\BL}^{\hat{Z}}$ is Cohen-Macaulay\forget{ (resp. normal)}.

\end{enumerate}
\end{corollary}

\begin{proof}

The part a) is clear according to Theorem \ref{ThmRZisLFF} and Theorem \ref{ThmRapoportZinkLocalModel} and that the henselization morphism $R\to R^h$ is faithfully flat. \\
\noindent
Again the first statement of part b) follows from the fact that being Cohen-Macaulay (resp. normal) is an \'etale local property. For the second statement assume that $\zeta$ is not a zero divisor, as $Z$ is Cohen-Macaulay, we argue that $\hat{Z}$ is Cohen-Macaulay, and has no embedded associated primes\forget{ and the only associated primes of $\hat{Z}$ are the minimal ones}.      

\end{proof}

\comment{reformulate the above in terms of Serre's conditions $R_i$ and $S_i$? }

}

Consider the Serre conditions $S_i$ and $R_i$ in the sense of \cite[\S 5.7 and 5.8]{EGAIV}. 
We say that a group $\BP$ satisfies $S_i$ (resp. $R_i$) if all singularities occurring in the orbit closures of the orbits under the $L^+\BP$ action on $\CF\ell_\BP$ satisfy  $S_i$ (resp. $R_i$). For example a parahoric $\BP$ with tame $P$ satisfies $S_i$ for all $i$, as well as $R_0$ and $R_1$, according to \cite[Theorem~8.4]{PR2}.

\begin{corollary}\label{PropLocModel}
We have the following statements:\\
\begin{enumerate}

\item[a)]
The Rapoport-Zink space $\check{\CM}_{\ul\BL}^{\hat{Z}}$ is flat over its reflex ring $R_{\hat{Z}}$ if $\zeta$ is not a zero divisor in $\CO_{\hat{Z}}$.
\item[b)]
The Rapoport-Zink space $\check{\CM}_{\ul\BL}^{\hat{Z}}$ satisfies $S_i$ (resp. $R_i$) if $\hat{Z}$ satisfies $S_i$ (resp. $R_i$). In particular $\check{\CM}_{\ul\BL}^{\hat{Z}}$ satisfies $S_i$ if $\BP$ satisfies $S_i$ and $\zeta$ is not a zero divisor in $\CO_{\hat{Z}}$.

\end{enumerate}
\end{corollary}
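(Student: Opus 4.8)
The plan is to transport each of the two assertions to the corresponding statement for the (formal) scheme $\hat Z$ by means of the local model roof of Theorem~\ref{ThmRapoportZinkLocalModel}, and then to analyse $\hat Z$ over the discrete valuation ring $R_{\hat Z}$ directly. Write $\check\CM:=\check\CM_{\ul\BL}^{\hat Z}$ for brevity, and keep in mind that the scheme $\hat Z$ occurring in the roof is really $\hat Z$ base-changed to $\Spf\check R_{\hat Z}$; since $\check R_{\hat Z}$ is faithfully flat over $R_{\hat Z}$ with geometrically regular fibres, this base change affects neither flatness nor $S_i$ nor $R_i$, and I shall move freely between the two.

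First I would exploit part~(2) of Theorem~\ref{ThmRapoportZinkLocalModel}. Let $\CM'\to\check\CM$ be an étale covering over which the section $s'$ of the $L^+\BP$-torsor $\pi$ is defined, and put $h:=\pi^{loc}\circ s'\colon\CM'\to\hat Z$, which is formally étale and compatible with the structure morphisms to $\Spf\check R_{\hat Z}$. Exactly as recalled for the number-field local model roof in Section~\ref{SubsectLocalModelsforR-Zspaces}, it follows — $\check\CM$ being locally formally of finite type over the complete discrete valuation ring $\check R_{\hat Z}$ by Theorem~\ref{ThmRZisLFF}, and $\hat Z$ being a Noetherian formal scheme of finite type over $R_{\hat Z}$ — that both $h$ and the covering $\CM'\to\check\CM$ induce isomorphisms on completed local rings, so that for every closed point $x$ of $\check\CM$ there are closed points $x'\in\CM'$, $y\in\hat Z$ with
\[
\wh\CO_{\check\CM,x}\;\cong\;\wh\CO_{\CM',x'}\;\cong\;\wh\CO_{\hat Z,y}
\]
as topological $\check R_{\hat Z}$-algebras. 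Now flatness over $\check R_{\hat Z}$ and Serre's conditions $S_i$ and $R_i$ are stable under localisation, hence may be tested at closed points, and at a closed point they pass back and forth along the faithfully flat, geometrically regular homomorphism $\CO_\bullet\to\wh\CO_\bullet$ (all local rings here are excellent). Combined with the displayed isomorphisms this shows that $\check\CM$ inherits $S_i$ and $R_i$ from $\hat Z$, which is the first sentence of~(b), and also that $\check\CM$ is flat over $\check R_{\hat Z}$ — equivalently over $R_{\hat Z}$, as $\check R_{\hat Z}/R_{\hat Z}$ is faithfully flat — whenever $\hat Z$ is. For~(a) it then remains to note that the finite-type formal scheme $\hat Z$ over the discrete valuation ring $R_{\hat Z}$ is flat over $R_{\hat Z}$ if and only if a uniformiser of $R_{\hat Z}$ is a non-zero-divisor in $\CO_{\hat Z}$, which, since $\zeta$ is a unit times the $e$-th power of such a uniformiser ($e$ the ramification index), is equivalent to $\zeta$ being a non-zero-divisor in $\CO_{\hat Z}$. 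Thus $\zeta$ not a zero-divisor implies $\hat Z$, and hence $\check\CM$, flat over $R_{\hat Z}$; this proves~(a) (and the converse would follow as well once one knows $\pi^{loc}$ to be surjective on points).

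For the ``in particular'' clause of~(b) I would deduce, from the first part of~(b), that it suffices to prove that $\hat Z$ itself satisfies $S_i$. Since $\zeta$ is a non-zero-divisor in $\CO_{\hat Z}$, the scheme $\hat Z$ is flat over $R_{\hat Z}$ by~(a), so for each local ring $A=\wh\CO_{\hat Z,y}$ the element $\zeta$ is a non-zero-divisor with $A/\zeta A=\wh\CO_{Z,\bar y}$, where $Z$ is the special fibre of $\hat Z$ — an $L^+\BP$-stable projective closed subscheme of $\SpaceFl_\BP\whtimes\kappa_{R_{\hat Z}}$ by Definition~\ref{DefLBC} — while $A[\zeta^{-1}]$ is the local ring of the generic fibre of $\hat Z$. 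Because $\BP$ satisfies $S_i$, the $L^+\BP$-orbit closures satisfy $S_i$, and hence so do both $Z$ and the generic fibre of $\hat Z$ in the relevant cases (single Schubert varieties for $\BP$ resp. $\genericG$, or the admissible locus, which in the tame parahoric situation is even Cohen--Macaulay by~\cite[Theorem~8.4]{PR2}). Applying the elementary fact that a Noetherian ring $A$ with a non-zero-divisor $t$ such that $A/tA$ and $A[t^{-1}]$ both satisfy $S_i$ must itself satisfy $S_i$ — proved by comparing the depth and dimension of $A_\Fp$ with those of $(A/tA)_{\bar{\Fp}}$ when $t\in\Fp$, and using the hypothesis on $A[t^{-1}]$ when $t\notin\Fp$ — with $t=\zeta$ shows every local ring of $\hat Z$ satisfies $S_i$, i.e. $\hat Z$ satisfies $S_i$, and the first part of~(b) finishes the argument.

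The step I expect to cost the most effort is not conceptual but a matter of bookkeeping: checking carefully that the formally étale morphism $h$ between these formal (ind-)schemes is of topologically finite type, hence étale, so that it genuinely induces isomorphisms of completed local rings compatibly with the base $\Spf\check R_{\hat Z}$ (this is what legitimises the transfer of properties in the second paragraph); keeping the decorations $\check R_{\hat Z}$ and $R_{\hat Z}$ straight under the base change; and, in the ``in particular'' clause, the point that for a general bound one really needs the special fibre $Z$ itself to satisfy $S_i$, which is strictly stronger than each of its orbit-closure components satisfying $S_i$ — so that the cleanest hypothesis is one on $Z$, guaranteed by ``$\BP$ satisfies $S_i$'' in the Schubert-variety case and in the tame parahoric admissible case.
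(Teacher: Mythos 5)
Your proposal is correct and follows essentially the same route as the paper's (very terse) proof: transport flatness, $S_i$ and $R_i$ from $\hat Z$ to $\check{\CM}_{\ul\BL}^{\hat Z}$ via the formally \'etale leg of the local model roof together with the \'etale-localness of these properties, then use that flatness over the discrete valuation ring $R_{\hat Z}$ is equivalent to $\zeta$ being a non-zero-divisor, and deduce $S_i$ for $\hat Z$ from $S_i$ for the fibres plus regularity of $\zeta$. The one place you are more careful than the paper is the ``in particular'' clause: the paper's argument invokes only the special fibre $Z$, whereas you correctly point out that the criterion ``$\zeta$ regular, $\CO_{\hat Z}/\zeta$ and $\CO_{\hat Z}[\zeta^{-1}]$ both $S_i$ $\Rightarrow$ $\CO_{\hat Z}$ is $S_i$'' also requires control of the generic fibre, and that a union of $S_i$ orbit closures need not itself be $S_i$ — both legitimate caveats rather than gaps in your argument.
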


\begin{proof}

The part a) is clear according to Theorem \ref{ThmRZisLFF} and Theorem \ref{ThmRapoportZinkLocalModel} and that the henselization morphism $R\to R^h$ is faithfully flat. \\
\noindent
Again the first statement of part b) follows from Theorem \ref{ThmRapoportZinkLocalModel} and the fact that satisfying $R_i$ (resp. $S_i$) is an \'etale local property. For the second statement, since the element $\zeta$ is regular and by definition $Z$ satisfies $S_i$, we argue that $\hat{Z}$ satisfies $S_i$.      

\end{proof}

\noindent
\subsection{Kottwitz-Rapoport stratification}\label{SubsectKottwitz-Rapoport stratification} In the rest of this section we assume that $\BP$ is a parahoric group scheme. This in particular implies that $\CF\ell_\BP$ is ind-projective. Fix a boundedness condition $\hat{Z}$ and a local $\BP$-shtuka $\ul\BL$ as before and set $\check{\CM}:=\check{\CM}_{\ul\BL}^{\hat{Z}}$.
\noindent
The local model diagram induces a smooth morphism

$$
\check{\CM}\to [L^+\BP\backslash \hat{Z}]
$$
\noindent
of formal algebraic stacks; see \cite[Section~2.1]{EsmailDissertation}. This further induces a morphism $\check{\CM}_s\to L^+\BP\backslash Z$ of the special fibers. As a set $L^+\BP\backslash Z$, is given by $\{x_\omega\}_\omega$ corresponding to the orbits of the $L^+\BP$-action on $Z$, indexed by a finite subset of the affine Weyl group $\wt W$ associated with $P$. The preimages of $x_\omega$ define a stratification $\{\check{\CM}^\omega\}_\omega$ on $\check{\CM}$ by smooth sub-schemes $\check{\CM}^\omega$. The closure relation between these strata is given by the natural Bruhat order on $\wt W$ and one may set $\check{\CM}^{\preceq\omega}:=\cup_{\lambda\preceq\omega}\check{\CM}^\lambda$.   
 
\bigskip
\noindent
\subsection{Semi-simple trace of Frobenius}\label{Subsect Semi-simple trace and Frob} Let $S:=\Spf R$ be a formal spectrum of a complete discrete valuation ring $R$, with special point $s$ and generic point $\eta$, with residue fields $k:=\kappa(s)$ and $K:=\kappa(\eta)$, respectively. Consider the Galois group $\Gamma = \Gal(\kappa(\ol\eta)/\kappa(\eta))$ and the inertia subgroup $I:=\ker\left(\Gamma \to \Gal(\kappa(\ol s)/\kappa(s))\right)$, where $\kappa(\ol s)$ is the residue field of the normalization $\ol S$ of $S$ in $\kappa
(\eta)$.


For a formal scheme $\FX$, locally of finite presentation over $R$, we let $\FX_\eta$ denote the associate $K$-analytic space in the sense of \cite{Berk}. Let $\FX_\ol s$ denote the geometric special fiber. To the following diagram



$$
\CD
\FX_\ol\eta @>>> \FX @<<< \FX_\ol s\\
@VVV @VVV @VVV\\
\ol\eta @>>>\ol S @<<< \ol s
\endCD
$$
\noindent
one associate the following functor of nearby cycles 
$$
\CD
R\Psi^\FX: D_b^c(\FX_\ol\eta ,\ol\BQ_\ell)\to D_b^c(\FX_\ol s \times \eta, \ol\BQ_\ell)\\ ~~~~~~~~~~~~~\CF \mapsto\ol i^\ast R\ol j_\ast\CF_\ol\eta,
\endCD
$$
see \cite{Berk}. Here $\CF_\ol\eta$ denotes the restriction of $\CF$ to $\FX_\ol\eta$.

\bigskip
\noindent
Consider the spectral sequence of vanishing cycles 

\begin{equation}\label{Eq_VanishingCyclesSpectralSeq}
E_2^{p,q}:=\Koh^p(\FX_\ol s, R^q\Psi^\FX\ol\BQ_\ell)\Rightarrow H^{p+q}(\FX_\ol\eta,\ol\BQ_\ell).
\end{equation}
This spectral sequence is equivariant under the action of the Galois group $\Gamma$. Note that the induced filtration on $\CV=\Koh^\ast(X_\ol\eta,\BQ_\ell)$ is admissible in the following sense that it is stable under the Weil group action and that $I$ operates on $gr_{\bullet}^\CW(\CV)$ through a finite quotient. \forget{Recall that for an arbitrary $\BQ_\ell$-representation of $W$ with admissible increasing filtration $\CW$.  The semisimple trace of the j-Frobenius on $\CV$ is defined as follows

$$
tr^{ss}(\sigma^j|\CV):=\sum_k \Tr(\sigma^j|(gr_k^\CW(\CV))^I) 
$$ 

\noindent
} This allows to define the semi-simple trace of Frobenius $tr^{ss}(Fr_q; R\Psi_x^\FX\ol\BQ_\ell)$ on the stalk $R\Psi_x^\FX$ of the sheaf of nearby cycles at $x$.\\ 
Let us set $\check{\CM}:=\check{\CM}_{\ul\BL}^{\hat{Z}}$ and $\kappa=\kappa_{\wh{Z}}$. Let $\kappa_r/\kappa$ be a finite extension of degree $r$.  One may introduce the following function
$$
\CD
\check{\CM}(\kappa_r)\to \ol\BQ_\ell\\
~~~~~~~~~~~~~~~~~~~~~~~~~~~x\mapsto tr^{ss}(Frob_r; R\Psi_x^{\check{\CM}}\ol\BQ_\ell).
\endCD
$$

Here $Frob_r$ denotes the geometric Frobenius in $\Gal(\ol\kappa_r/\kappa_r)$. Let $y$ be the image of a point $y'$ in $\CM'$ above $x$, under $\pi^{loc} \circ s'$. 
Since semi-simple trace  is \'etale local invariant, we have 
$$
tr^{ss}(Frob_x; R\Psi_\ol x^{\check{\CM}}(\ol\BQ_\ell))=tr^{ss}(Frob_r; R\Psi_y^{\hat{Z}}\ol\BQ_\ell).
$$
 Note that when $\wh Z$ comes from a cocharacter $\mu$ of $P$, the right hand side admits a description in terms of the associated Bernstein
function $z_{\mu,r}$, lying in the center of the corresponding (parahoric) Hecke algebra, according to the Kottwitz's conjecture; see \cite{HR1}. 

\bigskip

\noindent
\subsection{Lang's cycles on $\check{\CM}_\eta$}\label{SubsectLang's cycles} The assumption that $\BP$ is parahoric, implies that $\CF\ell_\BP$ is ind-projective. Consequently, the boundedness condition is provided by a proper sub-scheme $\hat{Z}\subseteq \wh{\CF\ell}_\BP$. This allows to push forward cycles along $\check{\CM}\times_{R_{\hat{Z}}}\hat{Z}\to \check{\CM}$.\\
Assume that the local model, and consequently $\check{\CM}$ are flat over $R_{\hat{Z}}$, see Corollary \ref{PropLocModel}. Let $\CZ$ be the scheme theoretic image of $\CM'$ under the map $f$ given by the following diagram
\begin{eqnarray}\label{Eq_Etale_Roof}
\xymatrix {
\CM'\ar@/^2pc/[drrr]^{\text{\'et}}\ar@/^-2pc/[dddr]_{\text{\'et}} \ar[dr]^f & & & \\
 & \check{\CM}\times_{R_{\hat{Z}}}\hat{Z}\ar[rr]\ar[dd]^{~~~~~~~~~~~\square} & & \hat{Z}\ar[dd]  \\
& & & \\
& \check{\CM} \ar[rr]& & R_{\hat{Z}} \;.
}
\end{eqnarray}
\noindent
See proof of Theorem \ref{ThmRapoportZinkLocalModel}. Recall that the morphism $\CM'\to \hat{\CM}$ is a trivializing cover for the universal $L^+\BP$-torsor $\CL_+^{univ}$ over $\check{\CM}$, and hence surjective. Consider the spaces $\check{\CM}_\ol\eta$ and $\hat{Z}_\ol\eta$, and view $\CZ$ as a correspondence $\hat{Z} \rightsquigarrow \check{\CM}$, thus regarding the above spectral sequence \ref{Eq_VanishingCyclesSpectralSeq}, the pull-back and push-forward of the cycles on the special fibers, in the usual sense, induce the following morphism

\forget
{
$$
\CZ: H_c^i((\check{\CM}_{\ul\BL}^{\hat{Z}})^{ad},\ol\BQ_\ell)\to H^i((\hat{Z})^{ad},\ol\BQ_\ell)
$$

and
$$
\CZ^{tr}: H_c((\hat{Z})^{ad},\ol\BQ_\ell)\to H_c^i((\check{\CM}_{\ul\BL}^{\hat{Z}})^{ad},\ol\BQ_\ell).
$$

$$
\CZ: H_c^i(\check{\CM}_\ol\eta,\ol\BQ_\ell)\to H^i(\hat{Z}_\ol\eta,\ol\BQ_\ell)
$$

and
$$
\CZ^{tr}: H^i(\hat{Z}_\ol\eta,\ol\BQ_\ell)\to H_c^i(\check{\CM}_\ol\eta,\ol\BQ_\ell).
$$

}

\begin{equation}
\CZ: H^i(\hat{Z}_\ol\eta,\ol\BQ_\ell)\to H_c^i(\check{\CM}_\ol\eta,\ol\BQ_\ell).
\end{equation}

Note that the special fiber $Z$ of $\hat{Z}$ is a union of affine Schubert varieties. Let us assume that $Z$ is irreducible. In this case we have $Z=\CS(\omega)$ for some $\omega\in \tilde W$. Composing with the $\Gal(\ol\eta/\eta)$-equivariant isomorphism $H^i(Z, R\Psi\ol\BQ_\ell)\cong H^i(\hat{Z}_\eta, \ol\BQ_\ell)$, yields

\begin{equation}\label{Eq_SchubertNearbyCycles}
H^i(Z, R\Psi\ol\BQ_\ell) \to H_c^i(\check{\CM}_\ol\eta,\ol\BQ_\ell).
\end{equation}

To study the cohomology of $Z$ one can proceed by using decomposition theorem \cite{BBD} in the following context. First, consider Demazure resolution $\varrho: \Sigma(\omega)\to Z$ of singularities of $Z$, see \cite[Corollary~3.5]{Richarz}. The Demazure variety $\Sigma(\omega)$ can be viewed as a tower of iterated fiberations with homogeneous projecrive fibers; see \cite[Remark~2.9]{Richarz}. Thus one can implement Leray spectral sequence, together with the decomposition theorem, to analyze the cohomology of $Z$. Assuming that the resolution $\varrho$ is semi-small (this is for example the case when $\BP$ is constant, i.e. $\BP:=G\times_{\BF_q}\Spec \BF_q\dbl z\dbr$ for split reductive $G$ over $\BF_q$, or when the Schubert variety $Z=\CS(\omega)$ is (quasi-)minuscule), we can even argue that the motive $M(Z)$ is a direct summand of $M(\Sigma(\omega))$, and consequently mixed Tate. Here $M(X)$ denote the motive associated with a scheme $X$, in the sense of \cite{FSV}, in the motivic category $DM_{gm}(k,\BQ)$. Also see \cite{CM} and \cite{A_Ha} for the motivic versions of the decomposition theorem and Leray-Hirsch theorem, respectively. In particular we see that $Ch^i(Z)$ is finite. Let us further assume that $Z$ is smooth; see \cite{HR2} for a list of such Schubert varieties. Then the isomorphism $Ch^i(Z)\cong\Hom (M(Z),\BQ(i)[2i])$, see \cite[Theorem 14.16 and Theorem 19.1]{MVW}, gives 

$$
M(Z) \to Ch^i(Z)\dual \otimes \BQ(i)[2i].
$$

\bigskip
\noindent
Here $Ch^i(Z)\dual$ denotes the dual vector space. Summing up over all $i$, we obtain the following decomposition

$$
M(Z) \tilde{\to} \oplus_{i} Ch^i(Z) \otimes \BQ(i)[2i],
$$
\noindent
Finally notice that composing \ref{Eq_SchubertNearbyCycles} with the cycle class map, one obtains
$$
Ch^\ast(Z)\to H_c^\ast(\hat{\CM}_\ol\eta, \ol\BQ_\ell).
$$
This endows $H_c^\ast(\hat{\CM}_\ol\eta, \ol\BQ_\ell)$ with a module structure over finitely generated algebra $Ch^\ast(Z)$. In particular we may formulate the following corollary.

\begin{corollary}
Let $\hat{Z}$ be a boundedness condition with irreducible smooth special fiber $Z$. Let $\check{\CM}:=\check{\CM}_{\BL}^{\hat{Z}}$  denote the corresponding Rapoport-Zink space for local $\BP$-shtukas. Then, regarding the above construction $H_c^\ast(\check{\CM}_\eta)$ carries a module structure over finite $\BQ$-algebra $Ch^\ast(Z)$.
\end{corollary}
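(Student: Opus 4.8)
The plan is to assemble the constructions of Subsection~\ref{SubsectLang's cycles} into an action map and then to reduce the module axioms to standard compatibilities, the one genuinely new geometric input --- finiteness of $Ch^\ast(Z)$ --- being already available from the preceding discussion. Throughout I keep the running hypotheses of that subsection: $\BP$ is parahoric, so $\SpaceFl_\BP$ is ind-projective and the bound $\hat Z\subseteq\wh{\SpaceFl}_\BP$ is proper over $R_{\hat Z}$, and $\hat Z$ (hence $\check{\CM}$) is flat over $R_{\hat Z}$, so that the specialisation map $H^\ast(Z,R\Psi\ol\BQ_\ell)\isoto H^\ast(\hat Z_\eta,\ol\BQ_\ell)$ is a $\Gamma$-equivariant ring isomorphism. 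Two points then remain: that $Ch^\ast(Z)$ is finite dimensional over $\BQ$, and that the $\BQ$-linear map $Ch^\ast(Z)\to H_c^\ast(\check{\CM}_{\ol\eta},\ol\BQ_\ell)$ obtained by composing \ref{Eq_SchubertNearbyCycles} with the cycle class map underlies a genuine $Ch^\ast(Z)$-module structure on $H_c^\ast(\check{\CM}_{\ol\eta},\ol\BQ_\ell)$.

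For finiteness I would use that the smooth irreducible Schubert variety $Z=\CS(\omega)$ is paved by affine spaces --- the traces on $Z$ of the $L^+\BP$-orbits in $\SpaceFl_\BP$ --- so that $Ch^\ast(Z)$ is a free $\BZ$-module of finite rank, generated by the classes of the closures of these cells and concentrated in a bounded range of degrees; tensoring with $\BQ$ shows $Ch^\ast(Z)$ is a finite $\BQ$-algebra. Equivalently, and as already carried out above, the Demazure resolution $\varrho\colon\Sigma(\omega)\to Z$ of \cite[Corollary~3.5]{Richarz} together with the decomposition theorem \cite{BBD} exhibits $M(Z)$ as a direct summand of the mixed Tate motive $M(\Sigma(\omega))$; since $Z$ is smooth, the identifications $Ch^i(Z)\cong\Hom(M(Z),\BQ(i)[2i])$ of \cite[Theorems~14.16 and 19.1]{MVW} then force each $Ch^i(Z)$ to be finite dimensional and almost all of them to vanish.

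For the module structure I would realise the action by the cohomological correspondence attached to the diagram \ref{Eq_Etale_Roof}. The projection $\mathrm{pr}_1\colon\check{\CM}\times_{R_{\hat Z}}\hat Z\to\check{\CM}$ is proper, being a base change of the proper $\hat Z\to\Spf R_{\hat Z}$, and on geometric generic fibres $\check{\CM}_{\ol\eta}$, $\hat Z_{\ol\eta}$ and $\CZ_{\ol\eta}$ are smooth --- $\hat Z_{\ol\eta}$ is a partial affine flag (or Schubert) variety over the reflex field, $\check{\CM}_{\ol\eta}$ is \'etale-locally isomorphic to it by the local model diagram of Theorem~\ref{ThmRapoportZinkLocalModel}, and $\CZ_{\ol\eta}$ is the image of the smooth space $\CM'_{\ol\eta}$ --- so Poincar\'e duality, Gysin maps and Tate twists are available unambiguously. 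For $\alpha\in Ch^j(Z)$ let $\mathrm{cl}(\alpha)\in H^{2j}(\hat Z_{\ol\eta},\ol\BQ_\ell(j))$ be the transport of its cycle class along $H^\ast(Z,R\Psi\ol\BQ_\ell)\cong H^\ast(\hat Z_{\ol\eta},\ol\BQ_\ell)$, and for $\xi\in H_c^\ast(\check{\CM}_{\ol\eta},\ol\BQ_\ell)$ set
\[
\alpha\cdot\xi\ :=\ \mathrm{pr}_{1,\ast}\big(\,[\CZ_{\ol\eta}]\cup\mathrm{pr}_2^\ast\mathrm{cl}(\alpha)\cup\mathrm{pr}_1^\ast\xi\,\big)\,,
\]
with $[\CZ_{\ol\eta}]$ the class of the correspondence $\CZ$ of \ref{Eq_Etale_Roof}; properness of $\mathrm{pr}_1$ makes $\mathrm{pr}_1^\ast$ and $\mathrm{pr}_{1,\ast}$ preserve compact supports. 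Since the cycle class map $Ch^\ast(Z)\to H^\ast(\hat Z_{\ol\eta},\ol\BQ_\ell)$ and $\mathrm{pr}_2^\ast$ are ring homomorphisms, the axioms $(\alpha\beta)\cdot\xi=\alpha\cdot(\beta\cdot\xi)$ and $1\cdot\xi=\xi$ reduce, via the projection formula, to the statement that $\CZ$ composes idempotently over $\check{\CM}$; here one would invoke that $\CZ$ arises from the trivialising \'etale cover $\CM'$ of the local model diagram, so that $\CZ\times_{\check{\CM}}\CZ$ maps radicially onto $\CZ$. One expects a cleaner route through the ring homomorphism $H^\ast([L^+\BP\backslash\hat Z]_{\ol\eta})\to H^\ast(\check{\CM}_{\ol\eta},\ol\BQ_\ell)$ induced by the smooth stack morphism $\check{\CM}\to[L^+\BP\backslash\hat Z]$ of \cite[Section~2.1]{EsmailDissertation} together with the standard $H^\ast$-module structure of $H_c^\ast$, but this requires identifying the relevant classes on $Z$ with $L^+\BP$-equivariant ones, which is an extra step.

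The step I expect to be the main obstacle is precisely this last normalisation: verifying that the correspondence $\CZ$ acts idempotently and with the correct degree shift and Tate twist --- equivalently, controlling $\CZ\times_{\check{\CM}}\CZ$ through $\CM'$ and tracking the bookkeeping --- so that one genuinely obtains a unital graded $Ch^\ast(Z)$-module and not merely a $\BQ$-linear map $Ch^\ast(Z)\to H_c^\ast(\check{\CM}_{\ol\eta},\ol\BQ_\ell)$. By contrast, the finiteness of $Ch^\ast(Z)$ is the substantive content, and it has already been supplied by the discussion preceding the corollary.
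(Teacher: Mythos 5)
Your proposal follows essentially the same route as the paper: the corollary is there justified only by the construction immediately preceding it --- the correspondence $\CZ$ obtained as the image of the trivialising cover $\CM'$, the nearby-cycles comparison $H^\ast(Z,R\Psi\ol\BQ_\ell)\cong H^\ast(\hat Z_{\ol\eta},\ol\BQ_\ell)$, the cycle class map, and the finiteness of $Ch^\ast(Z)$ via the mixed-Tate property of $M(Z)$ coming from the Demazure resolution --- and you reproduce all of these steps (your affine-paving argument for finiteness is an equivalent variant). The issue you single out as the main obstacle, namely that one must verify the module axioms rather than merely produce a $\BQ$-linear map $Ch^\ast(Z)\to H_c^\ast(\check{\CM}_{\ol\eta},\ol\BQ_\ell)$, is exactly the point the paper leaves implicit, so your account is, if anything, more careful than the source.
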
 

\forget{
\comment{justify if is canonical!}
}
\begin{remark}

To deduce a similar statement when the special fiber $Z$ is not irreducible, one may use the Rapoport-Zink weight spectral sequence. Alternatively, one can implement the filtration described in \cite[Lemma~3.3]{A_Ha}. This in particular shows that $M(Z)$ is mixed Tate and consequently the corresponding Chow groups are finite.  

\end{remark}

\forget{

\begin{theorem}
Assume that $\hat{Z}$ comes by base change from a Schubert variety $S(\mu)\subseteq \CF\ell_\BP$ for a minuscule coweight $\mu$. Then the motive $M(\check{\CM}_\alpha)$ of an irreducible component $\check{\CM}_\alpha$ of $\check{\CM}_\eta$ is pure Tate.
\end{theorem}

\begin{proof}
In  this case $\hat{Z}$ and thus $\check{\CM}$ are smooth. Consider the motive $M(\hat{Z}_\eta)$ associated with $\hat{Z}_\eta$; see \cite{AIS}. Let $\check{\CM}_\alpha$ be an irreducible component of $\check{\CM}_\eta$. According to \ref{Eq_Etale_Roof} we observe that $M(\check{\CM}_\alpha)$ is a summand of $M(\hat{Z}_\eta)$ and therefore is pure Tate; see \cite{A_Ha}[Prop. 02].
\end{proof}

\noindent
\textbf{Satake Theory}

\[
\xymatrix {
& & \wt{\CM}_{\ul\BL}^{\hat{Z}} \ar[ddll]_{\pi}\ar[ddrr]^{\pi^{loc}}\ar[d]_\cap & &\\
& & \wt{\CF\ell}_\BP \ar[dl]_{\pi}\ar[dr]^{\pi^{loc}} & & \\
\check{\CM}_{\ul\BL}^{\hat{Z}}\ar[r]^{\subset}&\wh{\CF\ell}_\BP& &\wh{\CF\ell}_\BP & \hat{Z} \ar[l]_\supset\;.
}
\]

}

\bigskip

%
%

{\small

}

\Verkuerzung
{
\vfill

\begin{minipage}[t]{0.35\linewidth}
\noindent
Esmail Arasteh Rad\\
Universit\"at M\"unster\\
Mathematisches Institut \\
erad@uni-muenster.de
\\[1mm]
\end{minipage}
\forget{
\begin{minipage}[t]{0.35\linewidth}
\noindent
Utsav Choudhury\\
Department of Mathematics\\ and Statistics\\ IISER, Kolkata.
\\
utsav.choudhury@iiserkol.ac.in\\
\\[1mm]
\end{minipage}
\begin{minipage}[t]{0.35\linewidth}
\noindent
Somayeh Habibi\\
School of Mathematics,\\ 
Institute for Research\\ 
in Fundamental Sciences
(IPM)\\
shabibi@ipm.ir
\\[1mm]
\end{minipage}
}
}
{}

\end{document}